\documentclass[leqno, 12pt]{amsart}

\usepackage{amsmath, amssymb, amsthm, cite, graphicx, float, mathrsfs, commath, dsfont, bbm, bm, mathtools, thmtools}
\usepackage{algorithm, algpseudocode}
\usepackage{tikz, enumitem}
\usetikzlibrary{patterns,snakes}
\usepackage{color}
\usepackage{optidef}
\usepackage[mathscr]{eucal}
\usepackage[sc]{mathpazo}




\renewcommand{\bar}{\overline}
\renewcommand{\hat}{\widehat}

\newcommand{\bB}{\mathbb{B}}

\newcommand{\bR}{\mathbb{R}}
\newcommand{\bN}{\mathbb{N}}

\newcommand{\R}{\bR}
\newcommand{\Rn}{\bR^n}

\newcommand{\eR}{{\overline\bR}}


\newcommand{\cP}{\mathcal{P}}

\newcommand{\cL}{\mathcal{L}}
\newcommand{\cC}{\mathcal{C}}

\newcommand{\cI}{\mathcal{I}}


\newcommand{\half}{\frac{1}{2}}


\DeclareMathOperator{\epi}{epi}
\DeclareMathOperator*{\argmin}{arg\,min}


\newcommand{\ip}[2]{\left\langle #1,\, #2\right\rangle}

\newcommand{\dom}[1]{\mathrm{dom}\left(#1\right)}

\newcommand{\bset}[2]{\left\{#1\,\left|\, #2\right.\right\}}




\newcommand{\indicator}[2]{\delta\left(#1\,\left|\, #2\right.\right)}

\DeclareMathOperator{\lev}{lev}

\newcommand{\ri}[1]{\mathrm{ri}\left(#1\right)}

\DeclareMathOperator{\aff}{aff}

\newcommand{\sd}{\partial}


\newcommand{\Del}{{\Delta}}

\newcommand{\eps}{\epsilon}


\newcommand{\bx}{\bar x}
\newcommand{\bt}{\bar t}

\newcommand{\bdel}{{\bar\delta}}

\newcommand{\beps}{{\bar\epsilon}}


\newcommand{\tM}{\widetilde M}


\newcommand{\hdel}{\hat\delta}



\newcommand{\Dcd}[1]{{D^{\scriptscriptstyle{C}}_{#1}}}
\newcommand{\Dco}{{D^{\scriptscriptstyle{C}}_{1}}}
\newcommand{\Deltac}[1]{{\Delta^{\scriptscriptstyle{C}}_{#1}}}
\newcommand{\Deltaco}{{\Delta^{\scriptscriptstyle{C}}_{1}}}

\linespread{1.05}
\usepackage[margin=1in, top=.8in, left=.8in]{geometry}
\usepackage{xcolor}
\usepackage{hyperref}
\hypersetup{ 
	colorlinks,
	linkcolor={red!50!black},
	citecolor={blue!50!black},
	urlcolor={blue!80!black}
}
\usepackage{cleveref}

\newtheorem{theorem}{Theorem}[section]
\newtheorem{corollary}{Corollary}[section]
\newtheorem{lemma}{Lemma}[section]
\newtheorem{definition}{Definition}[section]
\theoremstyle{definition}

\newtheorem{remark}{Remark}

\crefname{algocf}{alg.}{algs.}
\Crefname{algocf}{Algorithm}{Algorithms}

\algrenewcommand\algorithmicrequire{\textbf{Input:}}
\algrenewcommand\algorithmicensure{\textbf{Output:}}

\algnewcommand\algorithmicinput{\textbf{Initialize:}}
\algnewcommand\Initialize{\item[\algorithmicinput]}

\algnewcommand\algorithmicstepk{\textbf{Step }k\textbf{:}}
\algnewcommand\Step{\item[\algorithmicstepk]}

\usepackage[parfill]{parskip} 

\begin{document}
\title{Line Search and Trust-Region Methods for Convex-Composite Optimization}
\author{J.~V.~Burke}
\author{A.~Engle}
\thanks{University of Washington Department of Mathematics, Seattle, WA, \texttt{jvburke01@gmail.com}, \texttt{aengle2@uw.edu}}

\begin{abstract}
We consider descent methods for solving non-finite valued 
nonsmooth convex-composite optimization problems 
that employ Gauss-Newton subproblems to determine the iteration update. 
Specifically, we establish the global convergence properties for descent methods that
use a backtracking line search, a weak Wolfe line search, or a trust-region update.
All of these approaches are designed to 
exploit the structure associated with convex-composite problems.
\end{abstract}

\maketitle


\section{Introduction}

We consider three descent methods for solving the convex-composite optimization problem
\begin{mini}
	{x\in\R^n}{f(x):=h(c(x))+g(x),}{\tag{$\cP$}\label{theprogram}}{}
\end{mini}
where $h:\R^m\to\R$ is convex, $g:\R^n\to\eR$ is closed, proper, and convex, and $c:\R^n\to\R^m$ is $\cC^1$-smooth.
Our focus is on methods that employ search directions or steps  $d^k\in\R^n$  that approximate solutions to Gauss-Newton subproblems  
\begin{mini}
	{d\in\R^n}{\Delta f(x^k;d):=h(c(x^k)+\nabla c(x^k)d)+g(x^k+d) - f(x^k)}{\tag{$\cP_k$}\label{subprob}}{}
	\addConstraint{\norm{d}\leq\eta_k,}{}
\end{mini}
where $\set{x^k}\subset\dom{g}$ are the iterates generated by the algorithm, $\set{\eta_k}\subset(0,\infty]$, and $\Delta f(x;d)$ is an approximation to the directional derivative 
$f'(x;d)$ as in \cite{burke1985descent} 
(also see Lemma \ref{lem:deltaandf'}). 
By \emph{descent}, we mean that $d^k$ satisfies $\Delta f(x^k;d^k)<0$ at each iteration $k$. Two of the approaches are line search methods based on backtracking 
and weak Wolfe conditions. The third approach is a trust-region method. Although the weak Wolfe line search discussed here is motivated by this method for differentiable functions, as in \cite{lewis2013nonsmooth}, it is modified in a way that allows its application to nondifferentiable functions.

Algorithms for the
problem \ref{theprogram} have recently received renewed interest due to numerous modern applications in machine learning and nonlinear dynamics \cite{2017arXiv170502356D,aravkin2013sparse,davis2017nonsmooth, drusvyatskiy2018error,2016arXiv160500125D}. In a companion work, we establish 
conditions for the local super-linear and quadratic convergence of methods based
on \ref{subprob} when both $h$ and $g$ are assumed to be piecewise
linear-quadratic but not necessarily finite valued \cite{burke2018strong}.

Previously, the backtracking line search was studied in finite-valued case and in the 
absence of the function $g$ \cite{burke1985descent}. 
In recent work, Lewis and Wright \cite{lewis2016proximal} utilized a similar backtracking line search in the context of infinite-valued prox-regular composite optimization. 
Lewis and Overton \cite{lewis2013nonsmooth} developed a weak Wolfe algorithm using directional derivatives for finite-valued nonsmooth functions $f$ that are absolutely continuous along the line segment of interest, with finite termination in particular when the function $f$ is semi-algebraic. 
The method of Lewis and Overton can be applied in the finite-valued convex-composite case where $g=0$. 
Here, we develop a weak Wolfe algorithm for infinite-valued problems that uses the approximation $\Delta f(x;d)$ to the directional derivative, which exploits the structure associated with convex-composite problems.

The function $g$ in \ref{theprogram} is typically nonsmooth and is used to induce structure in the solution $\bx$. 
 For example, it can be used to introduce sparsity or group sparsity in the solution $\bx$ as well as bound constraints $\bx$.
Drusvyatskiy and Lewis \cite{drusvyatskiy2018error} have established local and global convergence of proximal-based methods for solving \ref{theprogram}, and Drusvyatskiy and Paquette \cite{2016arXiv160500125D} have established iteration complexity results for proximal methods to locate first-order stationary points for \ref{theprogram}. 

While the assumptions we use are similar to those in \cite{drusvyatskiy2018error,2016arXiv160500125D}, our algorithmic approach differs 
significantly. In particular, we use either a backtracking or an adaptive weak Wolfe line search, or a trust-region strategy to 
induce objective function descent at each iteration. 
In addition, we do not exclusively use proximal methods to generate search directions or employ the backtracking line search to estimate Lipschitz constants as in \cite{lewis2016proximal,2016arXiv160500125D}. 
Moreover, all of the methods discussed here make explicit use of the structure in \ref{theprogram}, thereby differing from the method developed in \cite{lewis2013nonsmooth}.

\section{Notation}
This section records notation and tools from convex and variational analysis used throughout the paper. Unless otherwise stated, we follow the notation of \cite{rockafellar_wets_1998,wright1999numerical,rockafellar2015convex, burke2018strong}.
\\
For any two points $x,x'\in\R^n$, denote the line segment connecting $x$ and $x'$ by $[x,x']:=\bset{(1-\lambda)x + \lambda x'}{0\leq\lambda\leq1}$. For a nonempty closed convex set $C\subset\R^m$ let $\aff{C}$ denote its \emph{affine hull}. Then the \emph{relative interior} of $C$ is
\[
\ri{C} = \bset{x \in \aff{C}}{\exists \,(\epsilon>0)\ (x+\epsilon\bB)\cap\aff{C}\subset C}.
\]
The functions in this paper take values in the extended reals $\eR:=\R\cup\set{\pm\infty}$. For $f:\R^n\to\eR$, the \emph{domain} of $f$ is $	\dom{f} := \bset{x\in\R^n}{f(x)<\infty}$,
and the \emph{epigraph} of $f$ is $\epi{f} := \bset{(x,\alpha)\in \R^n\times\R}{f(x)\le\alpha}$. 
\\
A function $f$ is \emph{closed} if the \emph{level sets} $\lev_f(\alpha):=\bset{x\in\R^n}{f(x)\le\alpha}$ are closed for all $\alpha\in\R$, \emph{proper} if $\dom{f}\neq\emptyset$ and $f(x)>-\infty$ for all $x\in\R^n$, and \emph{convex} if $\epi{f}$ is a convex subset of $\R^{n+1}$. For a set $X\subset \dom{f}$ and $\bx\in X$, the function $f$ is \emph{strictly continuous at $\bx$ relative to $X$} if
\begin{equation*}
\limsup_{\substack{x,x'\xrightarrow[X]{}\bar{x}\\x\neq x'}} \frac{\norm{f(x)-f(x')}}{\norm{x-x'}}<\infty,
\end{equation*}
where $x,x\xrightarrow[X]{}\bx\Longleftrightarrow x,x' \in X$ and $x,x'\to \bx$ represents \emph{converegence within $X$}.
This finiteness property is equivalent to $f$ being locally Lipschitz at $\bx$ relative to $X$ (see \cite[Section 9.A]{rockafellar_wets_1998}). By \cite[Theorem 10.4]{rockafellar2015convex}, proper and convex functions $g:\R^n\to\eR$ are strictly continuous relative to $\ri{\dom{g}}$.
To each nonempty closed convex set $C$, we associate the closed, proper, and convex \emph{indicator function} defined by
\begin{center}
	\(	\indicator{x}{C}:=\begin{cases}
	0 & x\in C,\\
	+\infty & x\not\in C.\end{cases}
	\)
\end{center}
Suppose $f:\R^n\to\eR$ is finite at $\bx$ and $w\in\R^n$. The \emph{subderivative} $\dif f(\bar{x}):\R^n\to\eR$ and \emph{one-sided directional derivative} $f'(\bx;\cdot)$ at $\bx$ for $w$ are
\begin{align*}
\dif f(\bar{x})(w) &:= \liminf_{\substack{t\searrow0\\ w'\to w}} \frac{f(\bar{x}+t w)-f(\bar{x})}{t}, &
f'(\bx;w) &:= \lim_{t\searrow0}\frac{f(\bx + t w) - f(\bx)}{t}.
\end{align*}
The structure of \ref{theprogram} allows the classical one-sided directional derivative $f'(\bx;\cdot)$ to capture the variational properties of its more general counterpart as discussed in the next section.
\\
Results in the following section also require the notion of \emph{set convergence} from variational analysis, as in \cite[Section 4.A]{rockafellar_wets_1998}. For a sequence of sets $\set{C_n}_{n\in\bN}$, with $C_n\subset\R^m$, the \emph{outer and inner limits} are defined, respectively, as
	\begin{align*}
		\limsup_{n\to\infty} C_n&:=\bset{x}{\exists\, (\text{infinite }K\subset\bN,\ x^k\xrightarrow[K]{} x)\ \forall\,(k\in K)\ x^k\in C_k}\\
		\liminf_{n\to\infty} C_n&:=\bset{x}{\exists\,(n_0\in\bN,\ x^n\to x)\ \forall\,(n\ge n_0)\ x^n\in C_n}.
	\end{align*}
	The sets $C_n$ \emph{converge} to a set $C$ if the two limits agree and equal $C$: \[
		\limsup_{n\to\infty} C_n = \liminf_{n\to\infty} C_n=C.
	\] With this notion of convergence in mind, we apply it to the epigraphs of a sequence of functions and say that $f^k:\R^m\to\eR$ \emph{epigraphically converge} to $f:\R^m\to\eR$, written $f^k\xrightarrow[]{e} f$, if and only if $\epi{f^k}\to\epi{f}$ (see \cite[Section 7.B]{rockafellar_wets_1998}).
\section{Properties of Convex-Composite Objectives}

The general convex-composite optimization problem \cite{burke1987second} is of the form
\begin{mini}
	{x\in\R^n}{f(x):=\psi(\Phi(x)),}{\label{eq:generalcvxcomp}}{}	
\end{mini}
where $\psi:\R^m\to\eR$ is closed, proper and convex, and $\Phi:\R^n\to\R^m$ is sufficiently smooth. Allowing infinite-valued convex functions $\psi$ into the composition introduces theoretical difficulties as discussed in \cite{rockafellar_wets_1998, lewis2016proximal,burke2018strong,lewis2002active,burke1992optimality}. In this work, we assume $f$ takes the form given in \ref{theprogram} by setting $\psi(y,x) := h(y) + g(x)$ and $\Phi(x) = (c(x),x)$. In this case, the calculus simplifies dramatically. As in \cite{burke1987second}, we have $\dom{f}=\dom{g}$ and
\begin{equation}\label{eq:littleoexpansion}
	f(x+d) = h(c(x)+\nabla c(x)d) + g(x+d) + o(\norm{d}).
\end{equation}
Consequently, at any $x\in\dom{g}$ and $d\in\R^n,\ f$ is directionally differentiable, with
\[
	\dif f(x)(d) = f'(x;d) = h'(c(x);\nabla c(x)d)+g'(x;d).
\]
This motivates defining the \emph{subdifferential} of $f$ at any $x\in\dom{g}$ by setting
\begin{equation}\label{eq:cvxcompsd}
	\sd{f}(x) := \nabla c(x)^\top\sd h(c(x)) + \sd{g}(x).
\end{equation}

Within the context of variational analysis \cite{rockafellar_wets_1998}, we have that $f$ is \emph{subdifferentially regular} on its domain and the subdifferential of $f$ as defined above agrees with the 
\emph{regular and limiting subdifferentials} of variational analysis. In particular, $f'(x;d) = \sup_{v\in \sd f(x)}\ip{v}{d}$.

Following \cite{burke1985descent}, we define an approximation to the directional derivative
that is key to our algorithmic development.
\begin{definition}
	Let $f$ be as in \ref{theprogram} and $x\in\dom{g}$. Define $\Delta f(x;\cdot):\R^n\to\eR$ by
	\begin{equation}
	\label{delta f}
		\Delta f(x;d) = h(c(x)+\nabla c(x)d) + g(x+d) - h(c(x)) - g(x).
	\end{equation}
\end{definition}
The next lemma records the interplay between $\Delta f(x;d)$ and its infinitesimal counterpart $f'(x;d)$ and is a consequence of \eqref{eq:littleoexpansion} and the definitions.
\begin{lemma}\label{lem:deltaandf'}
	Let $f$ be given as in \ref{theprogram} and let $x\in \dom{g}$. Then
	\begin{enumerate}[label=(\alph*)]
		\item the function $d\mapsto \Delta f(x;d)$ is convex;
		\item for any $d\in\R^n$, the difference quotients $\frac{\Delta f(x;td)}{t}$ are nondecreasing in $t>0$, with
		\begin{align*}
				f'(x;d) 
				&= \inf_{t>0} \frac{\Delta f(x; td)}{t},
		\end{align*}
		and in particular;
		\item for any $d\in \R^n,\ f'(x;d)\leq \Delta f(x;d)$;
		\item for any $d\in \R^n,\ t\in[0,1],\ \Delta f(x; td) \leq t\Delta f(x;d)$.
	\end{enumerate}
\end{lemma}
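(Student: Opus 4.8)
The plan is to derive all four claims from the structure of $\Delta f(x;d)$ together with the expansion \eqref{eq:littleoexpansion}. Write $\Delta f(x;d) = H(d) + G(d)$, where $H(d) := h(c(x)+\nabla c(x)d) - h(c(x))$ and $G(d) := g(x+d) - g(x)$.

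For part (a): $G$ is convex because $g$ is convex, and $H$ is the composition of the convex function $h$ with the affine map $d\mapsto c(x)+\nabla c(x)d$, hence convex; the sum of the two constants shift is irrelevant, so $d\mapsto\Delta f(x;d)$ is convex. For part (d): note $\Delta f(x;0) = 0$, so for $t\in[0,1]$ convexity of $\Delta f(x;\cdot)$ gives $\Delta f(x; td) = \Delta f(x; t d + (1-t)\cdot 0) \le t\Delta f(x;d) + (1-t)\Delta f(x;0) = t\Delta f(x;d)$. For part (b), the monotonicity of $t\mapsto \Delta f(x;td)/t$ on $t>0$ is the standard fact that difference quotients of a convex function through a fixed point ($0$, with value $0$) are nondecreasing: for $0<s<t$, write $sd = \frac{s}{t}(td) + (1-\frac{s}{t})\cdot 0$ and apply convexity to get $\Delta f(x; sd) \le \frac{s}{t}\Delta f(x; td)$, i.e. $\Delta f(x;sd)/s \le \Delta f(x;td)/t$. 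Being nondecreasing and bounded below as $t\searrow 0$ (by the directional derivative, once we identify the limit), the infimum over $t>0$ equals $\lim_{t\searrow 0}\Delta f(x;td)/t$. To identify this limit as $f'(x;d)$, use \eqref{eq:littleoexpansion}:
\begin{align*}
\frac{\Delta f(x;td)}{t} &= \frac{h(c(x)+t\nabla c(x)d) + g(x+td) - h(c(x)) - g(x)}{t}\\
&= \frac{f(x+td) - f(x) + o(t\norm{d})}{t} \xrightarrow[t\searrow 0]{} f'(x;d),
\end{align*}
since $f$ is directionally differentiable at $x\in\dom{g}$ as recorded above, and $o(t\norm d)/t \to 0$. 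Part (c) is then immediate: taking $t=1$ in the infimum from (b) gives $f'(x;d) = \inf_{t>0}\Delta f(x;td)/t \le \Delta f(x;d)$.

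The only delicate point is the interchange of the limit in $t$ with the $o(\norm{d})$ term in \eqref{eq:littleoexpansion}: one must be careful that the little-$o$ is uniform enough along the ray $\{td : t>0\}$, but since $\eqref{eq:littleoexpansion}$ is an expansion in $\norm{d}$ and we are simply substituting $d \leftsquigarrow td$ and dividing by $t$, the term becomes $o(t\norm d)/t$, which tends to $0$ as $t\searrow 0$ for fixed $d$; no uniformity beyond the single ray is needed. Everything else is a direct consequence of convexity of $\Delta f(x;\cdot)$ and the normalization $\Delta f(x;0)=0$.
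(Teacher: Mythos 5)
Your proof is correct and follows exactly the route the paper intends: the paper gives no explicit proof, stating only that the lemma ``is a consequence of \eqref{eq:littleoexpansion} and the definitions,'' and your argument (convexity of $d\mapsto\Delta f(x;d)$ as a sum of a convex function composed with an affine map and a convex function, monotonicity of difference quotients through the normalization $\Delta f(x;0)=0$, and identification of the limit via the little-$o$ expansion) is precisely that consequence spelled out. The only unaddressed corner case is a direction $d$ with $x+td\notin\dom{g}$ for all $t>0$, where both sides of (b) are $+\infty$ and the identity holds trivially in the extended reals.
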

We now state equivalent first-order necessary conditions for a local minimizer $\bx$ of \ref{theprogram}, emphasizing that $f'(x;d)$ and $\Delta f(x;d)$ are interchangeable with respect to these conditions. The proof of this result parallels that given in \cite{burke1987second} using \eqref{eq:littleoexpansion} and \eqref{eq:cvxcompsd}.
\begin{theorem}[First-order necessary conditions for \ref{theprogram}] \cite[Theorem 2.6]{burke1987second}\label{thm:fonc}
Let $h$, $c$, and $g$ be as given in \ref{theprogram}. If $\bx\in\dom{g}$ is a local minimizer of \ref{theprogram}, then 
	\(
		f'(\bx;d)\ge0,\text{ for all }d\in\R^n.
	\)
	Moreover, the following conditions are equivalent for any $x\in\dom{g}$,
	\begin{enumerate}[label=(\alph*)]
		\item $0\in\sd f(x)$;
		\item for all $d\in\R^n,\ 0\leq f'(x;d)$;
		\item for all $d\in\R^n,\ 0\leq \Delta f(x;d)$;
		\item for all $\eta>0,\ d=0$ solves
		\(\min\{\Delta f(x;d)\, |\, \norm{d}\leq\eta\}\).
	\end{enumerate}
\end{theorem}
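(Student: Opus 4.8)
The plan is to prove the necessary condition for a local minimizer directly from the definition of the one-sided directional derivative, and then to establish that the four conditions are equivalent through the pairwise implications $(a)\Leftrightarrow(b)$, $(b)\Leftrightarrow(c)$, $(c)\Leftrightarrow(d)$, using Lemma \ref{lem:deltaandf'}, the subdifferential representation $f'(x;d)=\sup_{v\in\sd f(x)}\ip vd$, and a convex separation argument. For the first assertion, fix $d\in\R^n$; since $f$ is directionally differentiable at every point of $\dom{g}$, the limit defining $f'(\bx;d)$ exists, and if $\bx$ is a local minimizer then $f(\bx+td)\ge f(\bx)$ for all sufficiently small $t>0$ --- trivially when $\bx+td\notin\dom{g}$, and by local optimality otherwise --- so every difference quotient $\frac{f(\bx+td)-f(\bx)}{t}$ is nonnegative, giving $f'(\bx;d)\ge0$. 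This is condition (b) at $x=\bx$.

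For $(b)\Leftrightarrow(c)$, Lemma \ref{lem:deltaandf'}(b) gives $f'(x;d)=\inf_{t>0}\frac{\Delta f(x;td)}{t}$, so $f'(x;d)\ge0$ if and only if $\Delta f(x;td)\ge0$ for every $t>0$; since $d$ ranges over all of $\R^n$, this is the same as $\Delta f(x;d)\ge0$ for all $d$ --- one may alternatively invoke the inequality $f'(x;d)\le\Delta f(x;d)$ of Lemma \ref{lem:deltaandf'}(c) directly. For $(c)\Leftrightarrow(d)$, note $\Delta f(x;0)=0$, so $d=0$ solves $\min\{\Delta f(x;d)\mid\norm d\le\eta\}$ exactly when $\Delta f(x;d)\ge0$ throughout $\eta\bB$; taking $\eta\ge\norm d$ for an arbitrary $d$ yields $(d)\Rightarrow(c)$, and $(c)\Rightarrow(d)$ is immediate. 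In fact, Lemma \ref{lem:deltaandf'}(d) lets one rescale any $d$ with $\norm d>\eta$ into $\eta\bB$, so (d) for a single $\eta>0$ already forces (c).

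Finally, for $(a)\Leftrightarrow(b)$ I use $f'(x;d)=\sup_{v\in\sd f(x)}\ip vd$: then $(a)\Rightarrow(b)$ is immediate since $0\in\sd f(x)$ gives $f'(x;d)\ge\ip 0d=0$, and for the converse I argue by contraposition. The set $\sd f(x)=\nabla c(x)^\top\sd h(c(x))+\sd g(x)$ is closed and convex --- $\sd h(c(x))$ is compact because $h$ is finite-valued convex, so its linear image is compact and convex, $\sd g(x)$ is closed and convex, and the sum of a compact set with a closed set is closed --- and it is nonempty, since otherwise $\sup_{v\in\sd f(x)}\ip v0=-\infty\neq0=f'(x;0)$. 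If $0\notin\sd f(x)$, strict separation of the point $0$ from this closed convex set produces $d\in\R^n$ with $\sup_{v\in\sd f(x)}\ip vd<0$, i.e.\ $f'(x;d)<0$, contradicting (b). The step requiring the most care is this separation argument --- verifying closedness of the subdifferential sum and excluding the degenerate empty case; the rest is bookkeeping with Lemma \ref{lem:deltaandf'}.
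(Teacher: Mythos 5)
Your proof is correct and follows exactly the route the paper points to: the paper itself gives no argument, deferring to \cite[Theorem 2.6]{burke1987second} with the remark that the proof uses \eqref{eq:littleoexpansion} and \eqref{eq:cvxcompsd}, and your chain (b)$\Leftrightarrow$(c)$\Leftrightarrow$(d) via parts (b)--(d) of Lemma \ref{lem:deltaandf'} (including the rescaling that makes a single $\eta$ suffice in (d)), together with (a)$\Leftrightarrow$(b) via $f'(x;d)=\sup_{v\in\sd f(x)}\ip{v}{d}$ and strict separation of $0$ from the nonempty closed convex set $\nabla c(x)^\top\sd h(c(x))+\sd g(x)$, is precisely that standard argument. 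The only delicate point is the one you flagged yourself: for a general closed proper convex $g$ the set $\sd g(x)$ can be empty at relative-boundary points of $\dom{g}$, but your appeal to the paper's asserted identity $f'(x;0)=\sup_{v\in\sd f(x)}\ip{v}{0}=0$ legitimately rules this out within the paper's framework (and in the degenerate case $f'(x;\cdot)$ would take the value $-\infty$, so (a) and (b) would fail together and the equivalence would still hold).
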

%
%
%
The next lemma shows that if 
the sequence $\{(x^k,d^k)\}\subset\R^n\times\R^n$ is such that
$d^k$ is an approximate solution to \ref{subprob} 
for all $k$ with $\Delta f(x^k;d^k)\to0$, then cluster points of $\set{x^k}$ are first-order stationary for \ref{theprogram}.
\begin{lemma}\label{lem:delfto0}
Let $h$, $c$, and $g$ be as in \ref{theprogram} and $\alpha\in\R$. Set $\cL:=\lev_f(\alpha)$. Let $\set{(x^k,\eta_k)}\subset\cL\times\R_+$, with $(x^k,\eta_k)\rightarrow (\bx,\bar\eta)\in\R^n\times\R_+$
and $0<\bar\eta<\infty$. 
Define
\begin{equation}\label{eq:epi functions}
\begin{aligned}
\Delta_k f(d)&:=\Delta f(x^k;d)+\delta_{\eta_k\bB}(d),\mbox{ and}
\\
\bar\Delta_k f&:=\min_d \Delta_k f(d)
\end{aligned}
\end{equation}

If, for each $k\geq1,\ d^k\in\eta_k\bB$ satisfies
\begin{equation}\label{eq:sandwich}
	\Delta f(x^k;d^k)\le\beta\bar\Delta_k f\leq0,
\end{equation}
with $\Delta f(x^k;d^k)\to0$, then $0\in \sd f(\bx)$.
\end{lemma}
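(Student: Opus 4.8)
The plan is to deduce $0\in\sd f(\bx)$ from Theorem \ref{thm:fonc} by showing that $d=0$ solves $\min\{\Delta f(\bx;d):\norm{d}\le\bar\eta\}$. I would do this by comparing the optimal value of \ref{subprob} at $x^k$ with that of the analogous problem at $\bx$, using an explicit translation in place of a general epi-convergence theorem.

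For the bookkeeping: since $h$ and $c$ are continuous and $g$ is closed, $f$ is lsc, so $\cL=\lev_f(\alpha)$ is closed; hence $\bx\in\cL\subset\dom g$ and $f(\bx)\in\R$. The sandwich \eqref{eq:sandwich} together with $\Delta f(x^k;d^k)\to0$ forces $\bar\Delta_k f\to0$. Adding the finite constant $f(x^k)=h(c(x^k))+g(x^k)$ to $\Delta_k f$ gives
\[
\bar\Delta_k f+f(x^k)=\inf_{\norm{d}\le\eta_k}\bigl[h(c(x^k)+\nabla c(x^k)d)+g(x^k+d)\bigr],
\]
and similarly $\mu+f(\bx)=\inf_{\norm{d}\le\bar\eta}[h(c(\bx)+\nabla c(\bx)d)+g(\bx+d)]$, where $\mu:=\inf\{\Delta f(\bx;d):\norm{d}\le\bar\eta\}$. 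Feasibility of $d=0$ together with $\Delta f(\bx;0)=0$ gives $\mu\le0$, so the task reduces to showing $\mu\ge0$.

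The crux is an upper bound on $\limsup_k(\bar\Delta_k f+f(x^k))$. Fix $d$ with $\norm{d}<\bar\eta$ and $\bx+d\in\dom g$ (otherwise there is nothing to bound). Set $d^k:=d+\bx-x^k$; then $d^k\to d$, and since $\eta_k\to\bar\eta>\norm{d}$ we have $\norm{d^k}\le\eta_k$ for all large $k$. By construction $x^k+d^k=\bx+d$, so $g(x^k+d^k)=g(\bx+d)$ for every $k$, while $c\in\cC^1$ and continuity of the finite-valued convex function $h$ give $h(c(x^k)+\nabla c(x^k)d^k)\to h(c(\bx)+\nabla c(\bx)d)$. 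Since $d^k$ is eventually feasible,
\[
\bar\Delta_k f+f(x^k)\le h(c(x^k)+\nabla c(x^k)d^k)+g(x^k+d^k)\xrightarrow[k\to\infty]{}h(c(\bx)+\nabla c(\bx)d)+g(\bx+d).
\]
Taking $\limsup_k$ and then the infimum over all such $d$ yields $\limsup_k(\bar\Delta_k f+f(x^k))\le\inf\{h(c(\bx)+\nabla c(\bx)d)+g(\bx+d):\norm{d}<\bar\eta\}$, and since $\Delta f(\bx;\cdot)$ is lsc with $\Delta f(\bx;td)\le t\,\Delta f(\bx;d)$ for $t\in(0,1]$ (Lemma \ref{lem:deltaandf'}(d)), this infimum over the open ball equals $\mu+f(\bx)$ (the infimum over $\bar\eta\bB$ is approached along segments toward the origin).

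To finish, $\bar\Delta_k f\to0$ turns the bound into $\limsup_k f(x^k)\le\mu+f(\bx)$, while lower semicontinuity of $f$ gives $f(\bx)\le\liminf_k f(x^k)$; as $f(\bx)$ is finite, $\mu\ge0$, hence $\mu=0$ and $\Delta f(\bx;d)\ge0$ for all $\norm{d}\le\bar\eta$. For arbitrary $d\ne0$, put $t=\min\{1,\bar\eta/\norm{d}\}\in(0,1]$; then $\norm{td}\le\bar\eta$, so $0\le\Delta f(\bx;td)\le t\,\Delta f(\bx;d)$ by Lemma \ref{lem:deltaandf'}(d), giving $\Delta f(\bx;d)\ge0$ for all $d\in\R^n$. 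This is condition (c) of Theorem \ref{thm:fonc}, which by that theorem is equivalent to $0\in\sd f(\bx)$. I expect the one genuinely delicate point to be the interplay between the recovery points $d^k$ and the moving constraint $\norm{d}\le\eta_k$; the translation $d^k=d+\bx-x^k$ is designed to handle exactly this, at the cost of the (elementary) reduction to the open ball. The more standard alternative would phrase the comparison as epigraphical convergence of $\Delta_k f$ with uniform level-boundedness, in which case the subtlety moves to the constraint qualification for passing the epi-limit through the sum defining $\Delta_k f$.
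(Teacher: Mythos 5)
Your argument is correct, and it reaches the conclusion by a genuinely different route from the paper. The paper's proof is an epi-convergence argument: it splits $\Delta_k f$ into $h_k+g_k+\delta_{\eta_k\bB}$, invokes \cite[Proposition 7.2, Exercise 7.8(d), Exercise 7.47]{rockafellar_wets_1998} to conclude $\Delta f(x^k;\cdot)+\delta_{\eta_k\bB}\xrightarrow{e}\Delta f(\bx;\cdot)+\delta_{\bar\eta\bB}$, and then uses \cite[Proposition 7.30]{rockafellar_wets_1998} to pass $\limsup_k\bar\Delta_k f=0$ to the limiting infimum, exactly the step you flag at the end as the place where the subtlety would migrate if one took the epi-convergence route. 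Your replacement is an explicit recovery-sequence construction: the translation $d^k=d+\bx-x^k$ makes $g(x^k+d^k)=g(\bx+d)$ hold \emph{exactly}, so no continuity of $g$ is ever needed (only continuity of $h\circ(\text{affine})$, which is automatic since $h$ is finite-valued convex and $c\in\cC^1$), and the moving constraint $\norm{d}\le\eta_k$ is absorbed because $\eta_k\to\bar\eta>\norm{d}$. The price is the reduction from the open ball to the closed ball, which your use of Lemma \ref{lem:deltaandf'}(d) handles correctly: for a boundary point $d$ with $\Delta f(\bx;d)<0$ one has $\Delta f(\bx;td)\le t\,\Delta f(\bx;d)\to\Delta f(\bx;d)$ as $t\nearrow1$, and for $\Delta f(\bx;d)\ge0$ the origin already does better. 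The remaining steps (squeezing $\bar\Delta_k f\to0$ from \eqref{eq:sandwich} with $\beta>0$, using lower semicontinuity of $f$ to force $\mu\ge0$, and rescaling to get $\Delta f(\bx;d)\ge0$ for all $d$ so that Theorem \ref{thm:fonc}(c) applies) are all sound. What each approach buys: yours is elementary and self-contained, and makes visible exactly why the infinite-valued $g$ causes no trouble; the paper's is shorter given the Rockafellar--Wets toolbox and its sum rule for epi-limits does the work your translation does by hand.
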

\begin{proof}Since $f$ is closed, $f(\bx)\leq \alpha$, which implies $\bx\in\dom{g}$. Define the functions
	\begin{align*}
		h_k(d) &:= h(c(x^k)+\nabla c(x^k)d) - h(c(x^k)),\\
		h_\infty(d) &:= h(c(\bx)+\nabla c(\bx)d) - h(c(\bx)),\\
		g_k(d) &:= g(x^k+d)-g(x^k), \text{ and}\\ 
		g_\infty(d) &:= g(\bx + d)-g(\bx). 
	\end{align*}
Since $0<\eta_k\to\bar\eta$, with $\bar\eta>0$, and since $\indicator{d}{\eta_k\bB} = \indicator{\frac{1}{\eta_k}d}{\bB}$, \cite[Proposition 7.2]{rockafellar_wets_1998} implies
\[
	\indicator{\cdot}{\eta_k\bB} \xrightarrow[]{e}\indicator{\cdot}{\bar\eta\bB}.
\] By \cite[Exercise 7.8(d)]{rockafellar_wets_1998},	\(g_k\xrightarrow[]{e}g_\infty,\)
so \cite[Exercise 7.47]{rockafellar_wets_1998} implies $g_k + \indicator{\cdot}{\eta_k\bB}\xrightarrow[k\to\infty]{e}g_\infty + \indicator{\cdot}{\bar\eta\bB}$, and applying \cite[Exercise 7.47]{rockafellar_wets_1998} again yields
\[
	h_k + g_k + \indicator{\cdot}{\eta_k\bB}\xrightarrow[]{e} h_\infty + g_\infty + \indicator{\cdot}{\bar\eta\bB}.
\]
Equivalently,
\[
	\Delta f(x^k;\cdot) + \indicator{\cdot}{\eta_k\bB} \xrightarrow[]{e}\Delta f(\bx;\cdot)+\indicator{\cdot}{\bar\eta\bB}.
\]
By \cite[Proposition 7.30]{rockafellar_wets_1998} and \eqref{eq:sandwich}, 
\[
	0=\limsup_k \bar\Delta_kf\leq\min_{\norm{d}\leq\bar\eta}\Delta f(\bx;d)\leq 0,
\]
so \Cref{thm:fonc} implies $0\in\sd f(\bx)$.
\end{proof}
The approximate solution condition \eqref{eq:sandwich}
is described in \cite{burke1985descent}. It can be satisfied by employing 
the
trick described in \cite[Remark 6, page 343]{burke1989infeasible}.
Specifically, any solution
technique solving the convex subproblems \ref{subprob}
that also generates lower bounds $\ell_{k,j}\in\R$ such that $\ell_{k,j}\nearrow \bar\Delta_kf$ and 
$\Delta f(x^k;d^{k,j})\searrow\bar\Delta_kf$ as $j\to\infty$. 
If $\bar\Delta_k f<0$, then the condition
\[ 
	\Delta f(x^k;d^{k,j})\leq\beta\ell_{k,j}
\]
is finitely satisfied, and
\[
	\Delta f(x^k;d^{k,j})\leq\beta\bar\Delta_k f.
\]
We conclude this section with a mean-value theorem for \ref{theprogram}.
\begin{theorem}[Mean-Value for convex-composite]\cite[Theorem 10.48]{rockafellar_wets_1998}\label{lem:mvt}
	Let $f$ be as in \ref{theprogram}, $g$ strictly continuous relative to its domain, and $x_0,x_1\in \dom{g}$. Then there exists $t\in(0,1),\ x_t:=(1-t)x_0+tx_1$ and $v\in \sd f(x_t)$ such that
	\[
	f(x_1) - f(x_0) = \ip{v}{x_1-x_0}.
	\]
\end{theorem}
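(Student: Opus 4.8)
The statement is the mean-value theorem \cite[Theorem 10.48]{rockafellar_wets_1998} specialized to the $f$ of \ref{theprogram}; the plan is to verify its hypotheses along the segment $[x_0,x_1]$ and then identify the subgradient it delivers with an element of $\sd f$. Equivalently, one may reduce to one dimension: setting $\varphi(s):=f((1-s)x_0+sx_1)$ for $s\in[0,1]$, the claimed identity reads $\varphi(1)-\varphi(0)=\ip{v}{x_1-x_0}$, so it suffices to find $s\in(0,1)$ at which the average slope $\varphi(1)-\varphi(0)$ is realized by a subgradient of $f$ at $x_s$.

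First I would establish the regularity of $\varphi$. Since $g$ is convex with $x_0,x_1\in\dom g$, the segment $[x_0,x_1]$ lies in $\dom g=\dom f$, so $\varphi$ is finite-valued on $[0,1]$; it is in fact Lipschitz there, because $h\circ c$ is locally Lipschitz on $\R^n$ (the finite convex function $h$ is strictly continuous by \cite[Theorem 10.4]{rockafellar2015convex} and $c$ is $\cC^1$) while $g$ is strictly continuous relative to $\dom g\supseteq[x_0,x_1]$ by hypothesis. Using that $f$ is directionally differentiable with $f'(x;d)=\sup_{v\in\sd f(x)}\ip{v}{d}$ and $\sd f(x_s)=\nabla c(x_s)^\top\sd h(c(x_s))+\sd g(x_s)$ (see \eqref{eq:cvxcompsd}), the function $\varphi$ is one-sidedly differentiable at each $s\in(0,1)$, with $\varphi'_+(s)=f'(x_s;x_1-x_0)$ and $\varphi'_-(s)=-f'(x_s;x_0-x_1)$; because $f'(x_s;\cdot)$ is sublinear we get $\varphi'_-(s)\le\varphi'_+(s)$, and because $\sd f(x_s)$ is convex the set $\{\ip{v}{x_1-x_0}:v\in\sd f(x_s)\}$ is an interval with endpoints $\varphi'_-(s)$ and $\varphi'_+(s)$.

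To conclude, set $c:=\varphi(1)-\varphi(0)$ and apply a Rolle-type argument to $\rho(s):=\varphi(s)-cs$, which is continuous on $[0,1]$ with $\rho(0)=\rho(1)$. If $\rho$ is not affine it attains an interior extremum at some $s^\ast\in(0,1)$; at an interior maximizer one has $\rho'_+(s^\ast)\le0\le\rho'_-(s^\ast)$, i.e. $\varphi'_+(s^\ast)\le c\le\varphi'_-(s^\ast)$, which together with $\varphi'_-(s^\ast)\le\varphi'_+(s^\ast)$ forces $c\in[\varphi'_-(s^\ast),\varphi'_+(s^\ast)]$ (symmetrically at an interior minimizer; if $\rho$ is affine then $\varphi$ is affine and any interior point serves). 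Picking $v\in\sd f(x_{s^\ast})$ with $\ip{v}{x_1-x_0}=c$ and taking $t:=s^\ast$ then finishes the proof.

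The step I expect to be the main obstacle is the very last one — realizing the average slope $c$ by an \emph{actual} subgradient $v\in\sd f(x_{s^\ast})$ — together with Lipschitz-ness of $\varphi$ up to the endpoints $s=0,1$. Because $g$ may be infinite valued, $f$ need not be strictly continuous on any open neighborhood of $[x_0,x_1]$, only \emph{relative to} its domain, so one cannot quote the classical (Lebourg) mean value theorem directly; it is precisely the hypothesis that $g$ is strictly continuous relative to $\dom g$ that controls $\varphi$ at the endpoints and keeps the subgradient sets along the segment well enough behaved in the direction $x_1-x_0$ for the extremum to be attained, which is why the general form \cite[Theorem 10.48]{rockafellar_wets_1998} is invoked. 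The rest is routine bookkeeping with the convex-composite calculus developed above.
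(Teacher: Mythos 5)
Your proof follows essentially the same route as the paper's: both restrict $f$ to the segment $[x_0,x_1]$, use the continuity of $g$ relative to its domain to obtain a continuous one-variable function with equal endpoint values, and run a Rolle-type argument at an interior extremum. The only real difference is cosmetic — the paper writes the first-order condition at the extremum as $0\in\sd\varphi(\bar{t})$ or $0\in\sd(-\varphi)(\bar{t})$ using the convex-composite chain rule applied to $\varphi$ itself, so that the subgradient $v$ with $\ip{v}{x_1-x_0}=f(x_1)-f(x_0)$ is handed over directly, whereas you phrase the same condition through the one-sided derivatives $\varphi'_{\pm}$ and the interval $\bset{\ip{v}{x_1-x_0}}{v\in\sd f(x_t)}$; the attainment step you flag as the main obstacle is precisely what the paper's chain-rule formulation dispatches.
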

\begin{proof}
	Let $F(t):=(1-t)x_0 + tx_1$ and let $\varphi(t) = f(F(t))-(1-t)f(x_0)-tf(x_1)$. Then 
	\[
	\varphi(t) = h(c(F(t))) + g(F(t)) - (1-t)f(x_0)-tf(x_1)
	\]
	is an instance of \ref{theprogram}, since $g\circ F$ is convex. Consequently, the chain rules for $\varphi$ and $-\varphi$ on $[0,1]$ are
	\begin{align*}
	\sd \varphi(t) &= F'(t)^\top \nabla c(F(t))^\top \sd h(c(F(t))) + F'(t)^\top\sd g(F(t)) + f(x_0)-f(x_1)\\
	&= \bset{\ip{v}{x_1-x_0}}{v\in \sd f(F(t))}+f(x_0)-f(x_1),\text{ and}\\
	\sd(-\varphi)(t) &= F'(t)^\top \nabla c(F(t))^\top \sd (-h)(c(F(t))) + F'(t)^\top\sd (-g)(F(t)) + f(x_1)-f(x_0)\\
	&= \bset{\ip{-v}{x_1-x_0}}{v\in \sd f(F(t))}+f(x_1)-f(x_0).
	\end{align*}
	As $g$ is continuous on its domain, $\varphi$ is continuous on $[0,1]$ with $\varphi(0)=\varphi(1)=0$. Therefore,
	$\varphi$ attains either its minimum or maximum value at some $\bt\in(0,1)$, and $0\in\sd\varphi(\bt)$ or $0\in \sd (-\varphi)(\bt)$ respectively.
\end{proof}
\section{Backtracking for Convex-Composite Minimization}
The simplest and most well established line search is the Armijo-Goldstein backtracking procedure \cite{wright1999numerical}.
It has been adapted for the convex-composite setting in \cite{burke1985descent, powell1984global, yuan1983global} where it takes the form
\[
f(x+td) \le f(x) + \sigma_1 t\Delta f(x;d)
\]
and enforces \emph{sufficient decrease} of $f$ along the ray $\bset{x+td}{t>0}$, with $\Delta f$ acting as a surrogate for the directional derivative. Existence of step sizes $t>0$ satisfying the sufficient decrease follows immediately from \Cref{lem:deltaandf'}. 
The method of proof to follow adapts the step-size arguments given in Royer and Wright \cite{royer2018complexity} to the convex-composite setting. Similar ideas on convex majorants for the composite \ref{theprogram} are employed in \cite{lewis2016proximal,2016arXiv160500125D}.
\begin{algorithm}[H]
	\caption{Global Backtracking}
	\label{alg:btglobal}
	\begin{algorithmic}[1]
		\Procedure{BacktrackingGlobal}{$x^0, \sigma_1, \theta$}
		\State $k\gets 0$\;
		\Repeat
		\State Find $d^k\in\R^n$ such that $\Delta f(x^k; d^k)<0$
		\If{no such $d^k$}
		\State $0\in \sd f(x^k)$\;
		\Return
		\EndIf
		\State $t\gets 1$\;
		\While{$f(x^k + td^k) > f(x^k) + \sigma_1t\Delta f(x^k;d^k)$}
		\State $t\gets \theta t$\;
		\EndWhile
		\State $t_k \gets t$\;
		\State $x^k \gets x^k + t_kd^k$\;
		\State $k\gets k+1$\;
		\Until{}
		\EndProcedure
	\end{algorithmic}
\end{algorithm}
\begin{theorem}\label{thm:btglobal}
	Let $f$ be as in \ref{theprogram}, $x^0\in\dom{g}$, $0<\sigma_1<1$, and $0<\theta<1$. Set $\cL:=\lev_f(f(x^0))$. Suppose there exists $M>0$ and $\tM>0$ such that $\norm{d^k}\leq M$, $\sup_{x\in \cL}\norm{\nabla c(x)}\leq \tM$, and that 
	\begin{enumerate}[label=(\roman*)]
		\item $\nabla c$ is $L_{\nabla c}$-Lipschitz on $\cL+M\bB_n$;
		\item $h$ is $L_h$-Lipschitz on $c(\cL+M\bB) + \tM M\bB_m$.
	\end{enumerate}
	Let $\set{x^k}$ be a sequence initialized at $x^0$ and generated by \Cref{alg:btglobal}:
	Then one of the following must occur:
	\begin{enumerate}[label=(\alph*)]
		\item the algorithm terminates finitely at a first-order stationary point for $f$;
		\item $f(x^k)\searrow-\infty$;
		\item $\displaystyle\sum_{k=0}^\infty\frac{\Delta f(x^k;d^k)^2}{\norm{d^k}_2^2}< \infty$, in particular, $\Delta f(x^k;d^k)\to0$.
		\end{enumerate}
\end{theorem}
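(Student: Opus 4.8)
The plan is to set up the standard descent-with-Armijo bookkeeping and show that if neither (a) nor (b) occurs, then the summability in (c) must hold. So assume the algorithm does not terminate finitely and $f(x^k)$ is bounded below along the sequence. The first step is to verify that the inner \texttt{while} loop terminates: since $\Delta f(x^k;d^k)<0$ and, by Lemma \ref{lem:deltaandf'}(c)--(d), $f(x^k+td^k) - f(x^k) = \Delta f(x^k;td^k) + o(t) \le t\Delta f(x^k;d^k) + o(t)$, for all small $t>0$ the Armijo test $f(x^k+td^k)\le f(x^k)+\sigma_1 t\Delta f(x^k;d^k)$ holds because $\sigma_1<1$; hence $t_k$ is well defined and $t_k>0$, and the iterates stay in $\cL$ by construction since each accepted step strictly decreases $f$.

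The heart of the argument is a uniform lower bound on the accepted step sizes $t_k$ in terms of $-\Delta f(x^k;d^k)/\norm{d^k}_2^2$. Using the little-$o$ expansion \eqref{eq:littleoexpansion} together with the Lipschitz hypotheses, I would derive a quadratic upper model: for $t\in[0,1]$,
\[
f(x^k+td^k) \le f(x^k) + t\,\Delta f(x^k;d^k) + \tfrac{1}{2}L_h L_{\nabla c}\,t^2\norm{d^k}_2^2,
\]
where the quadratic term comes from bounding $h(c(x^k)+t\nabla c(x^k)d^k) - h(c(x^k)) - t\bigl(h(c(x^k)+\nabla c(x^k)d^k)-h(c(x^k))\bigr)$ via $L_h$-Lipschitzness of $h$ and the $L_{\nabla c}$-Lipschitz estimate $\norm{c(x^k+td^k) - c(x^k) - t\nabla c(x^k)d^k}\le \tfrac{1}{2}L_{\nabla c}t^2\norm{d^k}_2^2$ on $\cL+M\bB_n$; the conditions $\norm{d^k}\le M$, $\sup_{\cL}\norm{\nabla c}\le\tM$, (i), and (ii) are exactly what make the relevant arguments of $h$ and $c$ lie in the stated domains. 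Comparing this quadratic model with the Armijo requirement shows that any $t$ with $t \le \dfrac{2(1-\sigma_1)\,\bigl(-\Delta f(x^k;d^k)\bigr)}{L_h L_{\nabla c}\,\norm{d^k}_2^2}$ passes the test; since backtracking multiplies by $\theta$, the accepted step satisfies either $t_k=1$ or $t_k \ge \dfrac{2\theta(1-\sigma_1)\,\bigl(-\Delta f(x^k;d^k)\bigr)}{L_h L_{\nabla c}\,\norm{d^k}_2^2}$.

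Plugging this lower bound on $t_k$ into the Armijo inequality gives, in the case $t_k<1$,
\[
f(x^{k+1}) \le f(x^k) - \frac{2\theta\sigma_1(1-\sigma_1)}{L_h L_{\nabla c}}\cdot\frac{\Delta f(x^k;d^k)^2}{\norm{d^k}_2^2},
\]
while in the case $t_k=1$ we get $f(x^{k+1})\le f(x^k) + \sigma_1\Delta f(x^k;d^k)$, and since $\norm{d^k}_2\le M$ this also dominates a constant multiple of $\Delta f(x^k;d^k)^2/\norm{d^k}_2^2$ (using that $-\Delta f\le$ a bounded quantity, so $(-\Delta f)\ge c(-\Delta f)^2/M^2$ is not quite automatic — instead note $-\sigma_1\Delta f(x^k;d^k) \ge \frac{\sigma_1}{\text{const}}\cdot\frac{\Delta f(x^k;d^k)^2}{\norm{d^k}_2^2}$ fails unless $-\Delta f$ is bounded, which follows from $f(x^k)$ decreasing and bounded below together with the one-step decrease). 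In all cases there is a fixed constant $\kappa>0$ with $f(x^k) - f(x^{k+1}) \ge \kappa\,\Delta f(x^k;d^k)^2/\norm{d^k}_2^2$. Telescoping over $k$ and using that $f(x^k)$ is bounded below (the negation of (b)) yields $\sum_k \Delta f(x^k;d^k)^2/\norm{d^k}_2^2 \le \kappa^{-1}(f(x^0) - \inf_k f(x^k)) < \infty$, which is (c); in particular $\Delta f(x^k;d^k)\to 0$.

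The step I expect to be the main obstacle is making the dichotomy between $t_k=1$ and $t_k<1$ clean — specifically, getting a single summable lower bound on the per-iteration decrease that works uniformly in both regimes. The subtlety is that when unit steps are accepted, the Armijo decrease is only linear in $-\Delta f(x^k;d^k)$, not quadratic; one needs the boundedness of $-\Delta f(x^k;d^k)$ (which itself requires knowing $f$ is bounded below along the sequence, i.e. the case analysis is genuinely needed) to convert the linear bound into a bound of the same quadratic-over-norm-squared form, so that a single telescoping argument closes the proof. Care is also needed to ensure all the points $x^k+td^k$ for $t\in[0,1]$ and the associated image points under $c$ remain in the sets $\cL+M\bB_n$ and $c(\cL+M\bB)+\tM M\bB_m$ where the Lipschitz constants are valid, but this is exactly what hypotheses (i), (ii) and the bounds on $\norm{d^k}$ and $\norm{\nabla c}$ are designed to guarantee.
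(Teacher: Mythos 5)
You follow essentially the same route as the paper: the quadratic upper model $f(x^k+td^k)\le f(x^k)+t\,\Delta f(x^k;d^k)+\tfrac{1}{2}L_hL_{\nabla c}t^2\norm{d^k}_2^2$ obtained from \Cref{lem:deltaandf'} and the Lipschitz hypotheses (i)--(ii), the resulting step-size lower bound $t_k\ge\min\bigl\{1,\ 2\theta(1-\sigma_1)|\Delta f(x^k;d^k)|/(L_hL_{\nabla c}\norm{d^k}_2^2)\bigr\}$, and a telescoping of the Armijo decrease against $f(x^0)-\lim_k f(x^k)>-\infty$. That part is correct and is exactly the paper's argument.

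The gap is precisely where you flagged it, and your proposed repair does not close it. In the unit-step regime the per-iteration decrease is only $\sigma_1|\Delta f(x^k;d^k)|$, and folding this into the single summable bound requires $\sigma_1|\Delta f(x^k;d^k)|\ge \kappa\,\Delta f(x^k;d^k)^2/\norm{d^k}_2^2$, which is equivalent to $|\Delta f(x^k;d^k)|\le (\sigma_1/\kappa)\norm{d^k}_2^2$. Boundedness of $|\Delta f(x^k;d^k)|$ --- or even $\Delta f(x^k;d^k)\to0$ --- does not yield this, because the hypotheses provide only the upper bound $\norm{d^k}\le M$ and no lower bound on $\norm{d^k}$: if $\norm{d^k}\to0$ with $|\Delta f(x^k;d^k)|$ of order $\norm{d^k}$ (the generic first-order scaling), then $|\Delta f(x^k;d^k)|/\norm{d^k}_2^2\to\infty$ and the conversion fails. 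What your argument actually delivers is $\sum_k\min\bigl\{|\Delta f(x^k;d^k)|,\ \kappa\,\Delta f(x^k;d^k)^2/\norm{d^k}_2^2\bigr\}<\infty$, which still gives $\Delta f(x^k;d^k)\to0$ but is weaker than (c) as stated. The paper handles this point by a different mechanism: it first establishes $\Delta f(x^k;d^k)\to0$ via the subsequence dichotomy from the proof of \Cref{thm:wwglobal} (if $\Delta f(x^k;d^k)\le-\gamma<0$ along a subsequence, then whichever branch of the min is active forces a per-step decrease bounded away from zero, contradicting $f(x^k)\searrow\bar f>-\infty$), and only then asserts that the quadratic branch of the min is eventually the active one so that the clean quadratic-over-norm-squared bound \eqref{eq:tklargeBT} can be summed. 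To obtain (c) in the stated form you should reproduce that two-stage argument rather than appeal to boundedness of $-\Delta f(x^k;d^k)$.
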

\begin{proof}
	We assume (a) - (b) do not occur and show (c) occurs. Since (a) does not occur, the sequence $\set{x^k}$ is infinite, and $\Delta f(x^k; d^k) < 0$ for all $k \ge0 $. The sufficient decrease \eqref{WWI} obtained by the backtracking subroutine gives a strict descent method, so the function values $\set{f(x^k)}$ are strictly decreasing, with $\set{x^k}\subset \cL$ for all $k\ge0$. In particular, $f(x^k)\searrow \bar f > -\infty$.

	We first show that for each $k\geq0$, the step size $0<t_k\leq1$ satisfies 
	\begin{equation}
	\label{eq:BTlowerboundstepsize}
	t_k\geq\min\set{1, \frac{\mu(1-\sigma_2)|\Delta f(x^k;d^k)|}{L_{\nabla c}L_h\norm{d^k}^2}},
	\end{equation}
	by considering two cases.
	
	If the unit step $t_k=1$ is accepted, the bound is immediate. Following \cite{royer2018complexity}, suppose now that the unit step length is not accepted. Then $\hat t:=\theta^j\in(0,1]$ does not satisfy the decrease condition for some $j\ge0$. Using the Lipschitz condition on $h$, the quadratic bound lemma, and \Cref{lem:deltaandf'}, we obtain
	\begin{align*}
		\sigma_1\hat t \Delta f(x^k;d^k) < f(x^k+\hat t d^k) - f(x) &\le \Delta f(x^k;\hat t d^k) + \frac{L_{\nabla c}L_h}{2}\norm{\hat t d^k}_2^2 \\
		& \le \hat t\Delta f(x^k;d^k) + (\hat{t})^2\frac{L_{\nabla c}L_h}{2}\norm{d^k}^2
	\end{align*}
	After dividing both sides by $\hat t>0$ and rearranging,
	\begin{equation}
		\hat t
		\geq \frac{2(1-\sigma_1)|\Delta f(x^k;d^k)|}{L_{\nabla c}L_h\norm{d^k}_2^2}.
	\end{equation}
	Consequently, when the backtracking algorithm terminates at $t_k>0$,
	\begin{equation}\label{eq:tklargeBT}
		t_k\geq \frac{2\theta(1-\sigma_1)|\Delta f(x^k;d^k)|}{L_{\nabla c}L_h\norm{d^k}_2^2}.
	\end{equation}
	Therefore, $t_k$ satisfying \eqref{WWI} implies
	\[
		\sigma_1 \min\set{1,\theta \frac{2(1-\sigma_1)|\Delta f(x;d)|}{L_{\nabla c}L_h\norm{d}_2^2}}|\Delta f(x;d)| \leq \sigma_1t_k|\Delta f(x^k;d^k)| \leq f(x^k)-f(x^{k+1}).
	\]
	Using the boundedness of the search directions and arguing as in the proof of \Cref{thm:wwglobal}, the bound \eqref{eq:tklargeBT} holds for all $k\geq k_0$. Summing the previous display,
	\[
	0 < \sum_{k\geq k_0}\theta \frac{2\sigma_1(1-\sigma_1)\Delta f(x^k;d^k)^2}{L_{\nabla c}L_h\norm{d^k}_2^2} < f(x^0) - \lim_{k\to\infty} f(x^k).
	\]
	Since (b) does not occur, $\lim_{k\to\infty} f(x^k) > -\infty$, so (c) must occur.
\end{proof}
\begin{remark}
	When $h$ is the identity on $\R$ and $g=0$, we recover the convergence analysis of backtracking for smooth minimization.
\end{remark}
The following corollary is an immediate consequence of \Cref{lem:delfto0}.
\begin{corollary}
	Let the hypotheses of \Cref{thm:btglobal} hold. If $0<\beta<1$ and the directions $\set{d^k}$ are chosen to satisfy
	\[
		\Delta f(x^k;d^k)\leq\beta\bar\Delta_k f<0,
	\]
	then the occurrence of (c) in \Cref{thm:btglobal} implies that cluster points of $\set{x^k}$ are first-order stationary for \ref{theprogram}.
\end{corollary}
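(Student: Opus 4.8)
The plan is to read this off as a combination of part (c) of \Cref{thm:btglobal} with \Cref{lem:delfto0}, applied along a convergent subsequence. First I would unpack what (c) gives us: the finiteness of $\sum_k \Delta f(x^k;d^k)^2/\norm{d^k}_2^2$ forces the individual terms to zero, and since $\norm{d^k}\le M$ uniformly, dividing by the fixed constant $M^2$ yields $\Delta f(x^k;d^k)\to0$ (this is precisely the ``in particular'' clause already recorded in \Cref{thm:btglobal}). I would also note, as established in the proof of \Cref{thm:btglobal}, that $\set{x^k}\subset\cL:=\lev_f(f(x^0))$; since $f$ is closed, $\cL$ is closed, so any cluster point $\bx$ of $\set{x^k}$ lies in $\cL$, hence in $\dom{g}$. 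Because (c) involves an infinite sum, the occurrence of (c) also means $\set{x^k}$ is an infinite sequence, so talking about cluster points is meaningful.

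Next I would fix a cluster point $\bx$ and a subsequence $x^{k_j}\to\bx$, and set up the data required by \Cref{lem:delfto0}. The radii $\eta_k$ entering the definition of $\bar\Delta_kf$ are taken bounded away from $0$ and $\infty$ — the simplest legitimate choice being $\eta_k\equiv M$, which is consistent with the hypothesis $\norm{d^k}\le M$ since then $d^k\in\eta_k\bB$. Passing to a further subsequence if necessary, $\eta_{k_j}\to\bar\eta$ with $0<\bar\eta<\infty$. Then $\{(x^{k_j},\eta_{k_j})\}\subset\cL\times\R_+$ converges to $(\bx,\bar\eta)$ with $\bar\eta\in(0,\infty)$, each $d^{k_j}\in\eta_{k_j}\bB$ satisfies the sandwich $\Delta f(x^{k_j};d^{k_j})\le\beta\bar\Delta_{k_j}f\le0$ required in \eqref{eq:sandwich} (the hypothesis gives the strict inequality $<0$, which in particular implies $\le0$), and $\Delta f(x^{k_j};d^{k_j})\to0$ by the previous paragraph. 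Thus \Cref{lem:delfto0} applies to the subsequence and yields $0\in\sd f(\bx)$, which by \Cref{thm:fonc} is exactly the assertion that $\bx$ is first-order stationary for \ref{theprogram}.

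The only step needing any care is a bookkeeping one rather than a genuine obstacle: the trust-region radii $\eta_k$ do not appear explicitly in \Cref{alg:btglobal}, so one must be precise about which $\eta_k$ define $\bar\Delta_kf$ in the corollary's hypothesis and confirm they can be arranged to converge to a point of $(0,\infty)$ along the chosen subsequence. Taking $\eta_k$ constant (or merely bounded below and above) settles this immediately, and everything else is a direct invocation of results already in hand — which is why this is stated as an immediate consequence of \Cref{lem:delfto0}.
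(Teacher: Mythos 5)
Your proof is correct and follows exactly the route the paper intends: the corollary is stated as an immediate consequence of \Cref{lem:delfto0}, and you simply supply the details --- extracting $\Delta f(x^k;d^k)\to0$ from (c), passing to a convergent subsequence of $\set{x^k}\subset\cL$, and verifying the hypotheses of \Cref{lem:delfto0} (including the sandwich condition \eqref{eq:sandwich}) to conclude $0\in\sd f(\bx)$. Your remark about pinning down the radii $\eta_k$ (e.g.\ taking them constant or bounded away from $0$ and $\infty$ so a subsequential limit $\bar\eta\in(0,\infty)$ exists) is a legitimate bookkeeping point the paper leaves implicit, and you resolve it correctly.
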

\section{Weak Wolfe for Convex-Composite Minimization}
\begin{definition}
Weak Wolfe in the convex composite case is defined at $x\in\dom{g}$ with $\Delta f(x;d)<0$ by choosing $0<\sigma_1<\sigma_2<1$ and $\mu > 0$ and requiring
\begin{align}
  \tag{WWI}\label{WWI}
  f(x+td) &\leq f(x) + \sigma_1t\Delta f(x;d), \text{ and} \\
  \tag{WWII}\label{WWII}
  \sigma_2\Delta f(x;d) &\leq \frac{\Delta f(x+td;\mu d)}{\mu}\ .
\end{align}	
\end{definition}
\begin{remark}
The second condition \eqref{WWII} is a \emph{curvature condition} that parallels the classical weak Wolfe \cite{wolfe1969convergence,wolfe1971convergence} curvature condition for smooth, unconstrained minimization:
\[
	\sigma_2 f'(x;d) \leq f'(x + td; d),
\]
which prevents the line search early termination at ``strongly negative" slopes \cite[Section 3.1]{wright1999numerical}.
\end{remark}
\begin{remark}
	The strong Wolfe conditions require $|f'(x+td;d)| \le -\sigma_2f'(x;d)$, whenever $f$ is smooth. However, in nonsmooth minimization, kinks and upward cusps at local minimizers make this condition unworkable.
\end{remark}
The following lemma shows that the set of points satisfying \eqref{WWI} and \eqref{WWII} has nonempty interior.
\begin{lemma}
Let $f$ be as in \ref{theprogram}, $x\in\dom{g}$, and $d$ chosen so that $\Delta f(x;d) < 0$. Suppose $f$ is bounded below on the ray $\{x+td:t>0\}$, and $\mu\in\R$. Then, the set
\[
  C(\mu):=\bset{t > 0}{\begin{aligned}
  f(x+td) &\leq f(x) + \sigma_1t\Delta f(x;d),\\
   \sigma_2\Delta f(x;d) &\leq \frac{\Delta f(x+td;\mu d)}{\mu}
  \end{aligned}}
\]
has nonempty interior for any $\mu>0$.
\end{lemma}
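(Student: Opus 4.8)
The plan is to argue that $C(\mu)$ is nonempty and that any point satisfying the two Wolfe conditions with \emph{strict} inequality is interior, then locate such a strict point by a standard ``last point of sufficient decrease'' argument borrowed from the smooth case and adapted via \Cref{lem:deltaandf'}. First I would dispose of the degenerate possibility: since $f(x) + \sigma_1 t \Delta f(x;d)\to -\infty$ as $t\to\infty$ (because $\Delta f(x;d)<0$) while $f$ is bounded below on the ray, the sufficient-decrease inequality \eqref{WWI} must fail for all large $t$; on the other hand, by \Cref{lem:deltaandf'}(c) and (d), for small $t>0$ we have $f(x+td) = h(c(x)) + g(x) + \Delta f(x;td) + o(t) \le f(x) + t\Delta f(x;d) + o(t)$, and since $\sigma_1 < 1$ and $\Delta f(x;d)<0$, the term $(1-\sigma_1)t\Delta f(x;d)$ dominates the $o(t)$, so \eqref{WWI} holds for all sufficiently small $t>0$. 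Hence the set $\{t>0 : \eqref{WWI}\text{ holds}\}$ is a nonempty set whose supremum $\bar t$ is finite and positive. Because $f$ is closed (lower semicontinuous) along the ray, \eqref{WWI} holds at $\bar t$ itself, i.e. $f(x+\bar t d) \le f(x) + \sigma_1 \bar t \Delta f(x;d)$, and by maximality $f(x+td) > f(x) + \sigma_1 t\Delta f(x;d)$ for $t>\bar t$.

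Next I would show $\bar t\in C(\mu)$, i.e. that the curvature condition \eqref{WWII} also holds at $\bar t$. Apply the mean-value theorem \Cref{lem:mvt} to $f$ on the segment $[x+\bar t d,\, x + (\bar t+s)d]$ for $s>0$: there is $\tau\in(0,1)$ and $v\in\sd f(x + (\bar t + \tau s)d)$ with $f(x+(\bar t+s)d) - f(x+\bar t d) = s\ip{v}{d}$. Since the left-hand side is $> \sigma_1 s \Delta f(x;d)$ by maximality of $\bar t$, we get $\ip{v}{d} \ge \sigma_1 \Delta f(x;d)$, hence $f'(x + (\bar t+\tau s)d; d) = \sup_{v'\in\sd f(\cdot)}\ip{v'}{d} \ge \sigma_1\Delta f(x;d)$. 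Letting $s\searrow 0$ and using that $\sd f$ is outer semicontinuous (subdifferential regularity on $\dom g$, as recorded after \eqref{eq:cvxcompsd}), one obtains $f'(x+\bar t d; d) \ge \sigma_1\Delta f(x;d) > \sigma_2 \Delta f(x;d)$, the last step using $\sigma_1 < \sigma_2$ and $\Delta f(x;d)<0$. Finally, by \Cref{lem:deltaandf'}(b), $f'(x+\bar t d; d) = \inf_{t>0}\Delta f(x+\bar t d; td)/t \le \Delta f(x+\bar t d; \mu d)/\mu$, so the curvature condition \eqref{WWII} holds at $\bar t$ with strict inequality in the sense that the \emph{left} comparison $\sigma_2\Delta f(x;d) < f'(x+\bar td;d)$ is strict.

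It remains to upgrade to an \emph{interior} point. I would distinguish whether \eqref{WWI} holds strictly at $\bar t$. If $f(x+\bar t d) < f(x) + \sigma_1\bar t\Delta f(x;d)$, then by lower semicontinuity of $t\mapsto f(x+td)$ (closedness of $f$) the strict inequality persists on a neighborhood of $\bar t$; combined with the fact that $t\mapsto \Delta f(x+td;\mu d)$ is continuous on the interior of its domain (finite-valued convex functions are continuous, and here $h\circ(c(\cdot)+\nabla c(\cdot)\mu d)$ is continuous while $g(x+td+\mu d)$ is finite near $\bar t$ since $\bar t \in \dom(\text{ray})$ with decrease) so \eqref{WWII} — which holds strictly at $\bar t$ in the comparison with $f'$, hence with slack in the $\Delta f$ form — also persists, giving an open interval around $\bar t$ inside $C(\mu)$. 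If instead equality holds in \eqref{WWI} at $\bar t$, I would slide slightly left: for $t$ in a left-neighborhood $(\bar t - \delta, \bar t)$ the supremum-defining property gives \eqref{WWI}, and one checks the curvature condition continues to hold there — either directly from monotonicity of difference quotients and continuity, or by noting $f'(x+td;d) \ge \sigma_1\Delta f(x;d) > \sigma_2\Delta f(x;d)$ holds on a neighborhood of $\bar t$ by outer semicontinuity of the subdifferential, which already forces \eqref{WWII} via \Cref{lem:deltaandf'}(b). This produces a nonempty open subinterval of $C(\mu)$.

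The main obstacle I anticipate is the interiority upgrade in the borderline case where \eqref{WWI} is tight at $\bar t$: there the sufficient-decrease constraint is active, and one must be careful that perturbing $t$ does not violate it. The cleanest fix is probably to work with the curvature condition phrased through $f'$ (where outer semicontinuity of $\sd f$ gives a clean neighborhood statement) rather than through $\Delta f$ directly, and to exploit that the strict gap $\sigma_1\Delta f < \sigma_2\Delta f$ (valid because $\Delta f(x;d)<0$) provides room; alternatively, one may invoke that $C(\mu)$ is the intersection of two closed conditions whose boundaries are met transversally, but making ``transversally'' precise is exactly the delicate point. A secondary technical nuisance is ensuring $x+td$ stays in $\dom g$ for $t$ near $\bar t$, which follows because \eqref{WWI} being satisfiable at such $t$ forces $f(x+td)<\infty$, hence $x+td\in\dom g$.
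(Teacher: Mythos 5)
Your argument has genuine gaps, and they cluster around your choice of base point. You set $\bar{t}:=\sup\{t>0:\ \eqref{WWI}\text{ holds at }t\}$, but the function $\phi(t):=f(x+td)-f(x)-\sigma_1 t\Delta f(x;d)$ is not convex in $t$ (only $g(x+td)$ is convex; $h(c(x+td))$ can oscillate), so the set where \eqref{WWI} holds need not be an interval. Consequently, in your borderline case the step ``for $t$ in a left-neighborhood $(\bar{t}-\delta,\bar{t})$ the supremum-defining property gives \eqref{WWI}'' is false: the supremum of a disconnected set need not be approached from inside the set. The paper instead uses the first exit time $\hat{t}$, defined so that $\phi<0$ on all of $(0,\hat{t})$ by construction, and --- this is the missing idea --- in the tight case $\phi(\hat{t})=0$ it does not try to verify the Wolfe conditions at $\hat{t}$ at all: it passes to an interior minimizer $\tilde{t}\in\argmin_{[0,\hat{t}]}\phi$, where first-order optimality $0\le\phi'(\tilde{t};\mu)\le\Delta\phi(\tilde{t};\mu)$ delivers the curvature condition with the stronger constant $\sigma_1$ in place of $\sigma_2$. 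The resulting strict slack (since $\Delta f(x;d)<0$ gives $\sigma_1\Delta f(x;d)>\sigma_2\Delta f(x;d)$), together with \eqref{WWI} holding strictly at the interior point $\tilde{t}$, is what lets relative continuity produce an open interval. At your $\bar{t}$ the inequality \eqref{WWI} has no slack, which is exactly why your interiority upgrade stalls.

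Two further steps fail as written. First, the mean-value step on $[x+\bar{t}d,\,x+(\bar{t}+s)d]$ requires $x+(\bar{t}+s)d\in\dom{g}$, which is not guaranteed; in fact, whenever \eqref{WWI} is strict at $\bar{t}$, continuity of $g$ relative to its domain forces the ray to leave $\dom{g}$ immediately to the right of $\bar{t}$ (otherwise $\phi<0$ would persist past the supremum). In that regime \Cref{lem:mvt} does not apply, your claimed two-sided open interval around $\bar{t}$ cannot exist, and the correct observation --- made explicitly in the paper --- is that $\Delta f(x+td;\mu d)=+\infty$ once $t+\mu$ exceeds the exit time, so \eqref{WWII} holds vacuously on $((\hat{t}-\mu)_+,\hat{t})$. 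Second, ``by lower semicontinuity the strict inequality persists on a neighborhood'' inverts the direction of lsc: lower semicontinuity propagates lower bounds on $f$, not the upper bound $f(x+td)<f(x)+\sigma_1 t\Delta f(x;d)$; what is needed is continuity of $g$ relative to its domain, which yields only one-sided persistence at a domain endpoint. Likewise, passing $f'(x+(\bar{t}+\tau s)d;d)\ge\sigma_1\Delta f(x;d)$ to the limit via outer semicontinuity of $\sd f$ requires bounded subgradient sequences, which can fail near the boundary of $\dom{g}$; this particular step is repairable by taking limits of the raw difference quotients (each of which exceeds $\sigma_1\Delta f(x;d)$ even when the numerator is $+\infty$), but the other gaps are structural.
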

\begin{proof}
Define
\begin{align*}
K(y, z, t) &:= h(y) + g(z) - [f(x) + \sigma_1t\Delta f(x;d)],\\
  G(t) &:= \begin{pmatrix}
    c(x+td) \\
    x + td \\
t
\end{pmatrix}, \text{ with } G'(t) = \begin{pmatrix}
\nabla c(x+td)d \\
d \\
1
\end{pmatrix},
\end{align*}
and set $\phi(t) := K(G(t)) = f(x+td) - [f(x) + \sigma_1t\Delta f(x;d)]$. Then, $\phi(t)$ is convex-composite,
\begin{align*}
	\Delta \phi(t;\mu) &= K(G(t) + G'(t)\mu) - K(G(t)) \\
	&= h(c(x+td) + \nabla c(x+td)\mu d) + g(x+(t+\mu)d) - [f(x)+\sigma_1(t+\mu)\Delta f(x;d)] \\
	&-(h(c(x+td)) + g(x+td) - [f(x)+\sigma_1t\Delta f(x;d)]) \\
	&= \Delta f(x+td; \mu d) - \mu \sigma_1 \Delta f(x;d),
\end{align*}
and, by \Cref{lem:deltaandf'},
\begin{align*}
\phi'(t;\mu) &= f'(x+td; \mu d) - \mu \sigma_1\Delta f(x;d)\\
& \le \mu\Delta\phi(t;1).
\end{align*}
Consequently, $\phi'(0;1)\le (1-\sigma_1)\Delta f(x;d)<0$, so there exists $\bt>0$ such that for all $t\in (0,\bar{t})$,  $\phi(t) < 0$. This is equivalent to \eqref{WWI} being satisfied on $(0,\bt)$.

Since $\phi$ is bounded below on the ray, $\phi(t)\nearrow\infty$. Let $\hat{t}:=\sup\{t > \bar{t}: \phi(s) < 0\text{ for all }s\in(0,t)\}$. Then, since $g$ is closed and $h$ is finite-valued,
\(
	\phi(\hat t)=\liminf_{t\nearrow\hat{t}} \phi(t),
\)
which implies
\begin{align*}
	h(c(x+\hat{t}d)) + g(x+\hat{t}d) &= -[f(x) + \sigma_1\hat{t}\Delta f(x;d)] + \liminf_{t\nearrow\hat{t}} \phi(t)\\
	&\le -[f(x) + \sigma_1\hat{t}\Delta f(x;d)] < \infty,
\end{align*}
so $x+\hat{t}d\in\dom{g}$. Since $g$ is continuous relative to its domain, $\phi$ is continuous relative to its domain, so $\phi(\hat{t})\le0$. We now consider two cases on the value of $\phi(\hat t)$. 

Suppose $\phi(\hat t) < 0$. We aim to show that $f$ satisfies \eqref{WWI} and \eqref{WWII} on the interval $((\hat t - \mu)_+,\hat t]$. To prove this, we show that 
if $\phi(\hat{t})<0$, then $t>\hat{t}$ implies $x+td\not\in\dom{g}$ and, as a consequence, $\hat t \ge1$. Suppose to the contrary that there exists $t>\hat{t}$ with $x+td\in\dom{g}$. Then, the definition of $\hat t$, convexity of $\dom\phi$, and the intermediate value theorem imply there exists $\tilde{t}$ such that $\phi(\tilde{t})=0$ and $t\ge\tilde{t}>\hat{t}$. But relative continuity of $\phi$ at $\tilde{t}$ with respect to $\dom\phi$ means there exist points in $(\hat{t},\tilde{t})$ which contradict the definition of $\hat{t}$. This proves the claim. Consequently, if $\phi(\hat{t})<0$, then $f$ satisfies both \eqref{WWI} and \eqref{WWII} on the interval $((\hat{t}-\mu)_+, \hat{t}]$, as the right-hand side of \eqref{WWII} is $+\infty$.

Otherwise, $\phi(\hat{t})=0$. Let $\tilde{t}\in\argmin_{t\in[0,\hat{t}]} \phi(t)$. Then, $\tilde{t}\in (0,\hat{t})$, with $0\le\phi'(\tilde{t};\mu)\le\Delta \phi(\tilde{t},\mu)$ for all $\mu$, equivalently
\[
	\frac{\Delta f(x+\tilde{t}d; \mu d)}{\mu}\geq \sigma_1\Delta f(x;d) > \sigma_2 \Delta f(x;d)\quad \forall\,\mu > 0,
\]
so \eqref{WWI} and \eqref{WWII} hold with strict inequality at $\tilde{t}$. We now consider two cases based on whether $x+(\tilde{t}+\mu)d\in\dom{g}$.

First, for all sufficiently small $\mu>0$, $x+(\tilde{t}+\mu)d\in\dom{g}$. Because the inequalities in \eqref{WWI} and \eqref{WWII} are strict at $\tilde t$, relative continuity of $f$ and of $t\mapsto\Delta f(x+td;d)$ at $t=\tilde{t}$ imply there exists an open interval $\cI$ with $\tilde{t}\in \cI$ and $x+\cI d\subset\dom{g}$ where both \eqref{WWI} and \eqref{WWII} hold.

For those $\mu>0$ for which $x + (\tilde{t}+\mu)d\not\in\dom{g}$, \eqref{WWI} and \eqref{WWII} hold for all $t\in ((\tilde{t}-\mu)_+, \hat{t})$ as argued in the previous case where $\phi(\hat t)<0$.
\end{proof}
%
Next, we prove finite termination of a bisection algorithm to point $\bar{t}\ge0$ satisfying the weak Wolfe conditions. The algorithm is analogous to the weak Wolfe bisection method for finite-valued nonsmooth minimization in \cite{lewis2013nonsmooth}.
    \begin{algorithm}[H]
    \caption{Weak Wolfe Bisection Method}    
    \label{alg:wwbisect}
    \begin{algorithmic}[1] 
    \Require $x\in\dom{g}, d\in\R^n$ with $\Delta f(x;d)<0$, and $0<\sigma_1<\sigma_2<1,\ \mu>0$.
\Procedure{WWBisect}{$x, d, \sigma_1, \sigma_2$}
        \State $\alpha \gets 0$;
        \State $t\gets 1$;
        \State $\beta \gets \infty$;
        \While{\eqref{WWI} and \eqref{WWII} fail}
        	\If{$f(x+td) > f(x) + \sigma_1 t \Delta f(x;d)$}
        	\Comment{If not sufficient decrease}
        		\State $\beta\gets t$\;
   			\ElsIf{$\sigma_2\Delta f(x;d) > \frac{\Delta f(x+td;\mu d)}{\mu}$}
   			\Comment{Else if not curvature}
			\State $\alpha\gets t$\;
  			\Else
  				\State return $t$\;
  			\EndIf
  			\If{$\beta=\infty$}
  			\Comment{Doubling Phase}
			\State $t\gets 2t$\;
			\Else
			\Comment{Bisection Phase}
				\State $t\gets \frac{1}{2}(\alpha+\beta)$\;
			\EndIf

  		\EndWhile        
  		\EndProcedure
\end{algorithmic}
\end{algorithm}
\begin{lemma} \label{lem:wwbisect}
	Let $f$ be given as in \ref{theprogram} with $g$ strictly continuous relative to its domain, and suppose $x\in\dom{g}$ and $d$ is chosen such that $\Delta f(x;d)<0$. Then, one of the following must occur in \Cref{alg:wwbisect}:
	\begin{enumerate}[label=(\alph*)]
		\item the doubling phase does not terminate finitely, with the parameter $\beta$ never set to a finite value, the parameter $\alpha$ becoming positive on the first iteration and doubling every iteration thereafter, with
$f(x + t_kd)\searrow -\infty$;
		\item both the doubling phase and the bisection phase terminate finitely to a $\bt\ge0$ for which the weak Wolfe conditions are satisfied.
	\end{enumerate}
\end{lemma}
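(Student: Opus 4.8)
The plan is to follow the two bracketing variables $\alpha$ and $\beta$ through \Cref{alg:wwbisect}. Write $t_k$ for the trial step at iteration $k$ and $\alpha_k,\beta_k$ for the values of $\alpha,\beta$ at the start of that iteration, and recall from the proof of the preceding lemma the convex-composite function $\phi(t):=f(x+td)-f(x)-\sigma_1t\Delta f(x;d)$, for which $\eqref{WWI}$ at $t$ is exactly the inequality ``$\phi(t)\le0$'', together with $\phi(0)=0$ and $\phi'(0;1)\le(1-\sigma_1)\Delta f(x;d)<0$; in particular $\phi<0$ on some interval $(0,\tau)$ with $\tau>0$. During the doubling phase $\beta=\infty$ and $t_k=2^k$, and the loop can only continue past iteration $k$ if $\eqref{WWI}$ holds but $\eqref{WWII}$ fails at $t_k$, in which case $\alpha$ is set to $t_k$ and $t$ is doubled. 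Hence if the doubling phase never terminates, then $\beta$ is never finite, $\alpha$ becomes $1$ on the first pass and doubles thereafter, and $\eqref{WWI}$ forces $f(x+t_kd)\le f(x)+\sigma_1t_k\Delta f(x;d)\to-\infty$ because $\Delta f(x;d)<0$ --- this is (a). If instead the doubling phase exits because $\eqref{WWI}$ and $\eqref{WWII}$ both hold at some $t_k$, the algorithm returns $t_k$ and (b) holds. In the remaining case the doubling phase stops by setting $\beta\gets2^{j_0}$ at the first $j_0$ at which $\eqref{WWI}$ fails, so the bisection phase begins from $0\le\alpha_0<\beta_0<\infty$ with $\phi(\beta_0)>0$ and, when $\alpha_0>0$, with $\eqref{WWI}$ holding but $\eqref{WWII}$ failing at $\alpha_0$.

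For the bisection phase I would record and preserve the invariant: $\phi(\beta_k)>0$; either $\alpha_k=0$, or $\eqref{WWI}$ holds and $\eqref{WWII}$ fails at $\alpha_k$; and $\alpha_k<\beta_k$. At the midpoint $t_k=\tfrac12(\alpha_k+\beta_k)$ the algorithm sets $\beta\gets t_k$ if $\eqref{WWI}$ fails, sets $\alpha\gets t_k$ if $\eqref{WWI}$ holds but $\eqref{WWII}$ fails, or returns $t_k$ (which then satisfies the weak Wolfe conditions, giving (b)); the first two options preserve the invariant and halve the bracket. So, unless the algorithm returns, $\beta_k-\alpha_k=2^{-k}(\beta_0-\alpha_0)\to0$ and $\alpha_k\nearrow t^*$, $\beta_k\searrow t^*$ to a common limit $t^*\in[\alpha_0,\beta_0]$, and it suffices to derive a contradiction from this.

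If $t^*=0$, then every bisection step was a ``$\beta$'' step, so $\alpha_k\equiv0$ and $\beta_k=2^{-k}\beta_0\to0$; but $\phi<0$ on $(0,\tau)$, so $\phi(\beta_k)\le0$ for large $k$, contradicting the invariant. If $t^*>0$, then $\alpha_k\equiv0$ is impossible (it would force $t^*=0$), so $\alpha_k>0$ --- hence $x+\alpha_kd\in\dom g$ --- for all large $k$. Because the closed convex function $s\mapsto g(x+sd)$ is finite on a left-neighborhood of $t^*$ but, by lower semicontinuity, would blow up at $t^*$ were it infinite there, $\phi(\alpha_k)\le0$ with $\alpha_k\nearrow t^*$ forces $x+t^*d\in\dom g$, and then lower semicontinuity of $\phi$ gives $\phi(t^*)\le0$, i.e. $\eqref{WWI}$ holds at $t^*$. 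Running the same blow-up argument on the finite quantities $\Delta f(x+\alpha_kd;\mu d)<\mu\sigma_2\Delta f(x;d)$ forces $x+(t^*+\mu)d\in\dom g$; then $t\mapsto\Delta f(x+td;\mu d)$ is finite and continuous on a left-neighborhood of $t^*$ (here I use that $g$ is continuous relative to $\dom g$), so passing to the limit gives $\tfrac1\mu\Delta f(x+t^*d;\mu d)\le\sigma_2\Delta f(x;d)$. By \Cref{lem:deltaandf'}(b), $f'(x+t^*d;d)\le\tfrac1\mu\Delta f(x+t^*d;\mu d)\le\sigma_2\Delta f(x;d)<\sigma_1\Delta f(x;d)<0$. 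Combining $\phi(t^*)\le0$ with $\phi'(t^*;1)=f'(x+t^*d;d)-\sigma_1\Delta f(x;d)<0$ shows $\phi<0$ on $(t^*,t^*+\epsilon_0)$ for some $\epsilon_0>0$, i.e. $\eqref{WWI}$ holds just to the right of $t^*$. But $\beta_k\searrow t^*$ with $\phi(\beta_k)>0$, and for large $k$ either $\beta_k\in(t^*,t^*+\epsilon_0)$ or $\beta_k=t^*$, so $\phi(\beta_k)\le0$ in both cases --- contradicting the invariant. Therefore the bisection phase terminates, and it can do so only with some $\bar t\ge0$ satisfying $\eqref{WWI}$ and $\eqref{WWII}$; this is (b).

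The main obstacle is the infinite-valued bookkeeping in the last step. Since $f$ is not assumed bounded below on the ray $\{x+td:t>0\}$, the event ``$\eqref{WWI}$ fails at $\beta_k$'' may hold merely because $f(x+\beta_kd)=+\infty$, so the limiting argument at $t^*$ is legitimate only after one certifies that $x+t^*d$ and $x+(t^*+\mu)d$ lie in $\dom g$; this is exactly where closedness of $g$ and its continuity relative to $\dom g$ enter. A secondary point is that $\phi$ is convex-composite but not convex, so ``$\phi$ dips below $0$ just past $t^*$'' must be obtained from the sign of the one-sided derivative $\phi'(t^*;1)$ rather than from convexity.
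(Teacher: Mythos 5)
Your proof is correct, but the key step is argued by a genuinely different route than the paper's. Both proofs handle the doubling phase and the $\alpha_k\equiv 0$ subcase in essentially the same way (yours substitutes ``$\phi<0$ on an initial interval'' for the paper's difference-quotient limit $\sigma_1\Delta f(x;d)\le f'(x;d)\le \Delta f(x;d)$, which are interchangeable). The divergence is in the main bisection subcase with $\bar t>0$. The paper applies its convex-composite mean-value theorem (\Cref{lem:mvt}) on $[x+\alpha_k d,\,x+\beta_k d]$ to produce intermediate points $\gamma_k\in(\alpha_k,\beta_k)$ with $\sigma_1\Delta f(x;d)\le f'(x+\gamma_k d;d)\le \Delta f(x+\gamma_k d;\mu d)/\mu$, and then contradicts the limiting curvature inequality $\Delta f(x+\bar t d;\mu d)/\mu\le\sigma_2\Delta f(x;d)$ using $\sigma_1<\sigma_2$. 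You instead take only the limiting curvature inequality, convert it via \Cref{lem:deltaandf'}(b) into $\phi'(\bar t;1)\le(\sigma_2-\sigma_1)\Delta f(x;d)<0$, and conclude that the sufficient-decrease region extends strictly past $\bar t$, contradicting the invariant $\phi(\beta_k)>0$ with $\beta_k\searrow\bar t$. This buys two things: you avoid invoking the mean-value theorem entirely (so that machinery, and the accompanying chain rule for $\sd\varphi$ and $\sd(-\varphi)$, is not needed for this lemma), and you make explicit the infinite-valued bookkeeping --- certifying $x+\bar t d\in\dom{g}$ and $x+(\bar t+\mu)d\in\dom{g}$ via closedness of $g$ before passing to the limit --- which the paper compresses into the assertion \eqref{eq:alphadom} and a brief appeal to lower semicontinuity. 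The paper's MVT argument, in exchange, is the more classical Wolfe-existence pattern and reuses a tool it needs anyway. One small point worth stating explicitly in your write-up: passing to the limit in $\Delta f(x+\alpha_k d;\mu d)/\mu<\sigma_2\Delta f(x;d)$ requires the term $-g(x+\alpha_k d)$ to converge to $-g(x+\bar t d)$, i.e.\ genuine continuity of $g$ along the segment (supplied by the strict-continuity hypothesis), not merely lower semicontinuity; you gesture at this but it is the one place where lsc alone would not suffice.
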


\begin{proof}
	Suppose the procedure does not terminate finitely. If the parameter $\beta$ is never set to a finite value, then the doubling phase does not terminate. Then the parameter $\alpha$ becomes positive on the first iteration and doubles on each subsequent iteration $k$, with $t_k$ satisfying
	\[
		f(x+t_kd) \leq f(x) + \sigma_1t_k\Delta f(x;d),\quad \forall\,k\ge1.
	\]
	Therefore, since $\Delta f(x;d)<0$, the function values $f(x+t_kd)\searrow-\infty$, so the first option occurs.
	
	Otherwise, the procedure does not terminate finitely, and $\beta$ is eventually finite. Therefore, the doubling phase terminates finitely, but the bisection phase does not terminate finitely. This implies 
	there exists $\bar{t} \geq 0$ such that
	\begin{equation}
	\label{wolfebisection}
		\alpha_k \nearrow \bar{t}, \quad t_k\to\bar{t}, \quad \beta_k\searrow \bar{t}.
	\end{equation}		
	We now consider two cases. First, suppose that the parameter $\alpha$ is never set to a positive number. Then, $\alpha_k=0$ for all $k\ge1$, and $t_k, \beta_k\to0$, so the first \texttt{if} statement is entered in each iteration. This implies
	\[
		 \sigma_1\Delta f(x;d) < \frac{f(x+t_kd)-f(x)}{t_k},\quad\forall\,k\ge1.
	\]
	Since $[x,x+d]\subset\dom{g}$, \Cref{lem:deltaandf'}  yields the chain of inequalities
	\[
		\sigma_1 \Delta f(x;d)\leq f'(x;d) \le \Delta f(x;d) < 0,
	\] 
	which contradicts $\sigma_1\in (0,1)$.
		
	Otherwise, the parameter $\alpha$ is eventually positive. Then, the bisection phase does not terminate, and the algorithm generates infinite
	sequences $\set{\alpha_k}, \set{t_k},$ and $\set{\beta_k}$ satisfying \eqref{wolfebisection} such that, for all $k$ large, $0<\alpha_k<t_k<\beta_k<\infty$, and
	\begin{align}
		f(x+\alpha_k d) &\leq f(x) + \sigma_1\alpha_k \Delta f(x;d), \label{eq:sufficientalpha}\\
		f(x+\beta_k d) &> f(x) + \sigma_1\beta_k \Delta f(x;d),\label{eq:notsufficientbeta} \\
		\sigma_2\Delta f(x;d) &> \frac{\Delta f(x + \alpha_k d ; \mu d)}{\mu}, \label{eq:notcurvaturealpha}\\
		[x,x+\max\set{\alpha_k+\mu,\beta_k}d] &\subset \dom{g}. \label{eq:alphadom}
	\end{align}
Letting $k\to\infty$ in \eqref{eq:notcurvaturealpha} and using lower semicontinuity of $g$ gives
	\begin{equation}
	\label{eq:curvaturecontradiction}
		\sigma_2 \Delta f(x;d) \geq \frac{\Delta f(x+\bar{t}d; \mu d)}{\mu}.
	\end{equation}
	By \Cref{lem:mvt},
 for sufficiently large $k$ 
there exists $\tau_k\in(0,1)$ so that the vectors 
\begin{align*}
	x^k &:= (1-\tau_k)(x+\alpha_k d) + \tau_k(x+\beta_k d) = x + [(1-\tau_k)\alpha_k + \tau_k\beta_k]d, \\
	v^k &\in \sd f(x^k)
\end{align*}
yield an extended form of the mean-value theorem
	\begin{equation}\label{eq:generalmvt}
		f(x+\beta_k d) - f(x + \alpha_k d) = \ip{v^k}{(\beta_k-\alpha_k)d}. 
	\end{equation}
	Let $\gamma_k := (1-\tau_k)\alpha_k + \tau_k\beta_k\in (\alpha_k,\beta_k)$, so that $x^k=x+\gamma_kd$. Then, $\gamma_k\to\bar{t}$ as $k\to\infty$. Combining \eqref{eq:sufficientalpha} and \eqref{eq:notsufficientbeta} and using \eqref{eq:generalmvt} gives
	\begin{align*}
		\sigma_1(\beta_k-\alpha_k)\Delta f(x;d) &< f(x+\beta_k d) - f(x + \alpha_k d) = \ip{v^k}{(\beta_k-\alpha_k)d}.
	\end{align*}
	Dividing by $\beta_k-\alpha_k>0$ gives 
	\begin{align*}
		\sigma_1\Delta f(x;d) &\le f'(x+\gamma_kd;d)\\ 
		&\le \frac{\Delta f(x+\gamma_kd;\mu d)}{\mu}. 
	\end{align*}
	As $k\to\infty$, using \eqref{eq:curvaturecontradiction}, we obtain the string of inequalities
	\[
		 \frac{\Delta f(x+\bar{t}d; \mu d)}{\mu} \leq \sigma_2\Delta f(x;d)<\sigma_1\Delta f(x;d) \leq \frac{\Delta f(x+\bar{t}d;\mu d)}{\mu},
	\]
	which is a contradiction. Therefore, either the doubling phase never terminates or the procedure terminates finitely at some $\bt$ at which $f$ satisfies both weak Wolfe conditions.
\end{proof}
A global convergence result for the weak Wolfe line search that parallels \cite[Theorem 2.4]{burke1985descent} now follows under standard Lipschitz assumptions, which hold, in particular, if the initial set $\lev_f(f(x^0))$ is compact.
	\begin{algorithm}[H]
	\caption{Global Weak Wolfe}
	\label{alg:wwglobal}
	\begin{algorithmic}[1]
		\Procedure{WeakWolfeGlobal}{$x^0, \sigma_1, \sigma_2, \mu$}
		\State $k\gets 0$\;
		\Repeat
		\State Find $d^k\in\R^n$ such that $\Delta f(x^k; d^k)<0$
		\If{no such $d^k$}
		\State $0\in \sd f(x^k)$\;
		\Return
		\EndIf
		\State Let $t_k$ be a step size satisfying \eqref{WWI} and \eqref{WWII}
		\If{no such $t_k$}
		\State $f$ unbounded below.\;
		\Return
		\EndIf
		\State $x^k \gets x^k + t_kd^k$\;
		\State $k\gets k+1$\;
		\Until{}
		\EndProcedure
	\end{algorithmic}
\end{algorithm}
\begin{theorem}\label{thm:wwglobal}	Let $f$ be as in \ref{theprogram} with $g$ strictly continuous relative to its domain, $x^0\in\dom{g}$, $0<\sigma_1<\sigma_2<1$, and $0<\mu<1$. Set $\cL:=\lev_f(f(x^0))$. Suppose there exists $M,\tM>0$ such that $\norm{d^k}\leq M$ for all $k\ge0$, $\sup_{x\in\cL}\norm{\nabla c(x)}\leq \tM$, and
	\begin{enumerate}[label=(\roman*)]
		\item $c$ is $L_c$-Lipschitz on $\cL$;
		\item $\nabla c$ is $L_{\nabla c}$-Lipschitz on $\cL$;
		\item $g$ is $L_g$-Lipschitz on $(\cL + M\mu\bB)\cap\dom{g}$;
		\item $h$ is $L_h$-Lipschitz on $c(\cL)+M\tM\mu\bB$.
	\end{enumerate}
	Let $\set{x^k}$ be a sequence initialized at $x^0$ and generated by \Cref{alg:wwglobal}:
Then at least one of the following must occur:
\begin{enumerate}[label=(\alph*)]
	\item the algorithm terminates finitely at a first-order stationary point for $f$;
	\item for some $k$ the step size selection procedure generates a sequence of trial step sizes 
$t_{k_n}\overset{n\uparrow\infty}{\longrightarrow}\infty$
such that $f(x^k + t_{k_n}d^k)\to-\infty$;
	\item $f(x^k)\searrow-\infty$;
	\item $\displaystyle\sum_{k=0}^\infty\frac{\Delta f(x^k;d^k)^2}{\norm{d^k}+\norm{d^k}^2}< \infty$, in particular, $\Delta f(x^k;d^k)\to0$.
\end{enumerate}
\end{theorem}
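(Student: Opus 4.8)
The plan is to argue by elimination: assume (a), (b), and (c) all fail and deduce (d). If (a) fails the stationarity test never triggers, so a descent direction $d^k$ with $\Delta f(x^k;d^k)<0$ is found at every step and $\set{x^k}$ is infinite; if (b) fails the doubling phase never diverges, so by \Cref{lem:wwbisect} the line search terminates finitely at every $k$ with some $t_k>0$ satisfying \eqref{WWI} and \eqref{WWII}. Condition \eqref{WWI} together with $t_k>0$ and $\Delta f(x^k;d^k)<0$ makes $\set{f(x^k)}$ strictly decreasing, so $\set{x^k}\subset\cL$ and each $x^k\in\dom{g}$; since (c) fails, $f(x^k)\searrow\bar f>-\infty$, and telescoping \eqref{WWI} gives
\[
	\sum_{k\ge0}\sigma_1 t_k\lvert\Delta f(x^k;d^k)\rvert\le f(x^0)-\bar f<\infty.
\]
Everything then reduces to a Zoutendijk-type lower bound $t_k\gtrsim\lvert\Delta f(x^k;d^k)\rvert/(\norm{d^k}+\norm{d^k}^2)$, possibly with a constant floor on a subset of indices; feeding this into the last display yields (d), and $\Delta f(x^k;d^k)\to0$ follows from $\norm{d^k}+\norm{d^k}^2\le M+M^2$.

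To extract the step-size bound I would exploit the curvature condition \eqref{WWII}. Rewrite it as $\mu\sigma_2\Delta f(x^k;d^k)\le\Delta f(x^{k+1};\mu d^k)$, where $x^{k+1}=x^k+t_kd^k$, and use \Cref{lem:deltaandf'}(d) with $t=\mu\in(0,1)$ to get $\Delta f(x^k;\mu d^k)\le\mu\Delta f(x^k;d^k)$. On indices $k$ with $x^{k+1}+\mu d^k\in\dom{g}$ (equivalently $\Delta f(x^{k+1};\mu d^k)<\infty$), subtracting these two inequalities gives
\[
	\mu(1-\sigma_2)\lvert\Delta f(x^k;d^k)\rvert\le\Delta f(x^{k+1};\mu d^k)-\Delta f(x^k;\mu d^k),
\]
and I would bound the right-hand side by writing it as the sum of two $h$-differences and two $g$-differences and applying hypotheses (i)--(iv). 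The key point of the bookkeeping is that the perturbations occurring are $\mu\nabla c(\cdot)d^k$ (norm $\le\mu M\tM$) and $\mu d^k$ (norm $\le\mu M$), while $c(x^k),c(x^{k+1})\in c(\cL)$ and $x^k,x^{k+1}\in\cL$, so every argument of $h$ and of $g$ lands inside the $\mu$-scaled neighborhoods on which they are assumed Lipschitz; the displacement from $x^k$ to $x^{k+1}$ contributes factors $L_ct_k\norm{d^k}$ (through $c$) and $L_{\nabla c}t_k\norm{d^k}$ (through $\nabla c$). This produces $\Delta f(x^{k+1};\mu d^k)-\Delta f(x^k;\mu d^k)\le t_k\bigl(C_1\norm{d^k}+C_2\norm{d^k}^2\bigr)$ with $C_1,C_2$ built from $L_c,L_{\nabla c},L_h,L_g,\mu$, hence $t_k\ge\frac{\mu(1-\sigma_2)}{\max\set{C_1,C_2}}\cdot\frac{\lvert\Delta f(x^k;d^k)\rvert}{\norm{d^k}+\norm{d^k}^2}$ on these indices.

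The step I expect to be the main obstacle is the complementary case $x^{k+1}+\mu d^k\notin\dom{g}$, where \eqref{WWII} holds trivially with right-hand side $+\infty$ and the estimate above is empty. Here I would instead lower-bound $t_k$ by convexity of $\dom{g}$: since $x^k\in\dom{g}$ and $x^k+d^k\in\dom{g}$ (the latter because $\Delta f(x^k;d^k)<\infty$) but $x^k+(t_k+\mu)d^k\notin\dom{g}$, one must have $t_k+\mu\ge1$, i.e. $t_k\ge1-\mu>0$. To turn this into a summable quantity I would also record the a priori estimate $\lvert\Delta f(x^k;d^k)\rvert\le\tfrac1\mu\lvert\Delta f(x^k;\mu d^k)\rvert\le(L_h\tM+L_g)\norm{d^k}$, valid for every $k$ by \Cref{lem:deltaandf'}(d) and hypotheses (iii)--(iv); then on these indices $\frac{\Delta f(x^k;d^k)^2}{\norm{d^k}+\norm{d^k}^2}\le(L_h\tM+L_g)\lvert\Delta f(x^k;d^k)\rvert\le\frac{L_h\tM+L_g}{\sigma_1(1-\mu)}\bigl(f(x^k)-f(x^{k+1})\bigr)$.

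To finish, partition the indices $k\ge0$ into the two sets above. On the first, $\frac{\Delta f(x^k;d^k)^2}{\norm{d^k}+\norm{d^k}^2}\lesssim t_k\lvert\Delta f(x^k;d^k)\rvert$, whose sum is controlled by $f(x^0)-\bar f$ via the first display; on the second, the previous display and telescoping again bound the sum by a multiple of $f(x^0)-\bar f$. Adding the two gives $\sum_k\Delta f(x^k;d^k)^2/(\norm{d^k}+\norm{d^k}^2)<\infty$, which is (d), and the ``in particular'' is immediate since $\norm{d^k}\le M$. The two points that need care are the domain bookkeeping in the Lipschitz estimate — the $\mu$-factors in hypotheses (iii)--(iv) are exactly what keep the arguments in range, so one cannot be sloppy about which neighborhood each point lies in — and the $\dom{g}$-escape case, which has no analogue in the smooth theory and is forced on us by allowing $g$ to be infinite-valued.
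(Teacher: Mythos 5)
Your proposal is correct, and its core — the two-case lower bound on $t_k$ (the $\dom{g}$-escape case giving $t_k\ge 1-\mu$ via convexity of $\dom{g}$, and the in-domain case giving $t_k\gtrsim|\Delta f(x^k;d^k)|/(\norm{d^k}+\norm{d^k}^2)$ by subtracting $\Delta f(x^k;\mu d^k)\le\mu\Delta f(x^k;d^k)$ from the rescaled curvature condition and bounding $\Delta f(x^{k+1};\mu d^k)-\Delta f(x^k;\mu d^k)$ by the four-term Lipschitz decomposition) — is exactly the paper's argument, down to the constants $2L_hL_c t_k\norm{d^k}+L_hL_{\nabla c}\mu t_k\norm{d^k}^2+2L_gt_k\norm{d^k}$. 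Where you genuinely diverge is in assembling the final sum. The paper argues by contradiction on subsequences that the ratio term in $\min\set{1-\mu,\cdot}$ is eventually the active one, so that \eqref{eq:tklarge} holds for all $k\ge k_0$, and only then sums; your version instead partitions the index set into the two cases and treats them separately, using on the escape indices the a priori bound $|\Delta f(x^k;d^k)|\le\frac{1}{\mu}|\Delta f(x^k;\mu d^k)|\le(L_h\tM+L_g)\norm{d^k}$ (legitimate, since $x^k+\mu d^k\in\dom{g}$ by convexity and hypotheses (iii)--(iv) cover the relevant neighborhoods) to convert $\Delta f(x^k;d^k)^2/(\norm{d^k}+\norm{d^k}^2)$ into a multiple of $f(x^k)-f(x^{k+1})$. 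This buys something real: the paper's subsequence argument implicitly needs the ratio not to be dominated by $1-\mu$ infinitely often, which is delicate when $\norm{d^k}\to0$, whereas your partition handles both index classes directly and uniformly. The price is the extra a priori estimate, but it follows in one line from \Cref{lem:deltaandf'}(d) and the stated Lipschitz hypotheses, so your route is, if anything, the cleaner of the two.
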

\begin{proof}
We assume (a) - (c) do not occur and show (d) occurs. Since (a) does not occur, the sequence $\set{x^k}$ is infinite, and $\Delta f(x^k; d^k) < 0$ for all $k \ge0 $. Since (b) does not occur, \Cref{lem:wwbisect} implies that the weak Wolfe bisection method terminates finitely at every iteration $k\ge0$. The sufficient decrease condition \eqref{WWI} gives a strict descent method, so the function values $\set{f(x^k)}$ are strictly decreasing, with $\set{x^k}\subset \cL$ for all $k\ge0$. By the nonoccurrence of (c), $f(x^k)\searrow \bar f > -\infty$.

We first show that for each $k\geq0$, the step size $t_k$ satisfies 
	\begin{equation}
	\label{eq:lowerboundstepsize}
		t_k\geq\min\set{1-\mu, \frac{\mu(1-\sigma_2)|\Delta f(x^k;d^k)|}{K\left(\norm{d^k}+\norm{d^k}^2\right)}},
	\end{equation}
by considering two cases.

First, suppose $\Delta  f(x^{k+1};\mu d^k)=\infty$. Then $x^{k+1}+\mu d^k=x^k+(t_k+\mu)d^k\not\in\dom{g}$. Since $x^k+d^k\in \dom{g},\ t_k+\mu>1$, and by assumption $0<\mu<1$. Therefore, $t_k\ge 1-\mu$.

Otherwise, $\Delta f(x^{k+1};\mu d^k)<\infty$. Then
	\begin{align*}
\Delta f(x^{k+1};\mu d^k) - \Delta f(x^k;\mu d^k)
&=h(c(x^{k+1}) + \nabla c(x^{k+1})\mu d^k) - h(c(x^{k+1})) + g(x^{k+1} + \mu d^k) - g(x^{k+1}) \\
&- [h(c(x^k) + \nabla c(x^k)\mu d^k) - h(c(x^k)) + g(x^k + \mu d^k) - g(x^k)] \\
&= h(c(x^k)) - h(c(x^{k+1})) \\
& + h(c(x^{k+1}) + \nabla c(x^{k+1})\mu d^k) - h(c(x^k) + \nabla c(x^k)\mu d^k) \\
& + g(x^k) - g(x^{k+1}) + g(x^{k+1} + \mu d^k) - g(x^k + \mu d^k) \\
&\le 2L_hL_ct_k\norm{d^k} + L_hL_{\nabla c}\mu t_k\norm{d^k}^2 + 2L_gt_k\norm{d^k}\\
&\le Kt_k\left(\norm{d^k}+\norm{d^k}^2\right),
\end{align*}
for some $K\ge0$. Adding and subtracting in \eqref{WWII} gives
	\begin{align*}
\sigma_2\Delta f(x^k;d^k) &\leq \frac{\Delta f(x^k+t_kd^k;\mu d^k)}{\mu} \\
&= \frac{\Delta f(x^k;\mu d^k)}{\mu} + \left[\frac{\Delta f(x^k+t_kd^k;\mu d^k)}{\mu} - \frac{\Delta f(x^k;\mu d^k)}{\mu}\right]\\
&\leq \Delta f(x^k;d^k) + \frac{K}{\mu}t_k\left(\norm{d^k}+\norm{d^k}^2\right) (\text{since }0<\mu<1),
\end{align*}
which rearranges to
\begin{equation}\label{eq:tklarge}
0 < \frac{\mu(1-\sigma_2)|\Delta f(x^k;d^k)|}{K\left(\norm{d^k}+\norm{d^k}^2\right)} \leq t_k,
\end{equation}
so \eqref{eq:lowerboundstepsize} holds. Next, \eqref{WWI} and \eqref{eq:lowerboundstepsize} imply 
	\begin{equation}
	\label{eq:suffdecrstepsize}
	\small{\sigma_1\min\set{1-\mu, \frac{\mu(1-\sigma_2)|\Delta f(x^k;d^k)|}{K\left(\norm{d^k}+\norm{d^k}^2\right)}}|\Delta f(x^k;d^k)| \leq \sigma_1t_k |\Delta f(x^k;d^k)| \leq f(x^k)-f(x^{k+1}).}
	\end{equation} 
We aim to show that the bound \eqref{eq:tklarge} holds for all large $k$ by showing $\Delta f(x^k;d^k)\to0$ and using boundedness of the search directions $\set{d^k}$. Suppose there exists a subsequence
$J_1\subset \bN$ for which $\Delta f(x^k;d^k)\not\xrightarrow[J_1]{} 0$. Let $\gamma>0$ be such that $\sup_{k\in J_1} \Delta f(x^k;d^k) \leq -\gamma < 0$. Then, since $\set{d^k}\subset M\bB$,
\begin{equation}
\label{eq:rationot}
	\frac{\mu(1-\sigma_2)|\Delta f(x^k;d^k)|}{K\left(\norm{d^k}+\norm{d^k}^2\right)} \not\xrightarrow[J_1]{} 0.
\end{equation}
If there exists a further subsequence $J_2\subset J_1$ with
\[
	\frac{\mu(1-\sigma_2)|\Delta f(x^{k};d^{k})|}{K\left(\norm{d^{k}}+\norm{d^{k}}^2\right)} \ge 1-\mu,\quad \forall\,k\in J_2,
\] 
then by expanding the recurrence given by \eqref{WWI}, and writing $J_2 = \set{k_1,k_2,\dotsc}$, we have 
\begin{align}
	\label{eq:recurrbounds}f(x^{k_n}) &\leq f(x^{k_{n-1}}) - \sigma_1(1-\mu)\gamma \\
	\nonumber &\leq f(x^{k_1}) - C(k_n)\sigma_1(1-\mu)\gamma
\end{align}
with $C(k_n)\to\infty$ as $n\to\infty$. This contradicts the nonoccurence of (c). By \eqref{eq:rationot}, there exists a subsequence $J_2\subset J_1$ and $\delta>0$ so that
\[
	0<\delta\le\frac{\mu(1-\sigma_2)|\Delta f(x^{k};d^{k})|}{K\left(\norm{d^{k}}+\norm{d^{k}}^2\right)} < 1-\mu
\]
for all large $k\in J_2$. Repeating the argument at \eqref{eq:recurrbounds} with $\delta$ in place of $1-\mu$, we conclude 
\[
	\frac{\mu(1-\sigma_2)|\Delta f(x^k;d^k)|}{K\left(\norm{d^k}+\norm{d^k}^2\right)} \xrightarrow[J_1]{} 0,
\]
and consequently $\Delta f(x^k;d^k)\xrightarrow[J_1]{}0$, which is a contradiction. Therefore, \eqref{eq:tklarge} holds for all $k\ge k_0$. Summing over $k\in \bN$ in \eqref{eq:suffdecrstepsize} gives
	\[
		0 < \sum_{k\ge k_0}\frac{\sigma_1\mu(1-\sigma_2)\Delta f(x^k;d^k)^2}{K\left(\norm{d^k}+\norm{d^k}^2\right)} < f(x^0) - \lim_{k\to\infty} f(x^k).
	\]
Since (c) does not occur, $\lim_{k\to\infty} f(x^k) > -\infty$, so (d) must occur.
\end{proof}
\begin{remark}
	When $h$ is the identity on $\R$ and $g=0$, we recover the convergence analysis of weak Wolfe for smooth minimization given in \cite[Theorem 3.2]{wright1999numerical}.
\end{remark}
\begin{remark}
	The hypotheses of \Cref{thm:wwglobal} simplify if $h$ is globally Lipschitz. In that case, the boundedness condition on $\bset{\norm{\nabla c(x)}}{x\in\cL}$ is not necessary. Alternatively, if $\norm{\nabla c(x)}$ is bounded on the closed convex hull of $\cL$, then the Lipschitz condition of $c$ on $\cL$ is immediate.
\end{remark}
The following corollary is an immediate consequence of \Cref{lem:delfto0}.
\begin{corollary}
	Let the hypotheses of \Cref{thm:wwglobal} hold. If $0<\beta<1$ and the directions $\set{d^k}$ are chosen to satisfy
	\[
		\Delta f(x^k;d^k)\leq\beta\bar\Delta_k f<0,
	\]
	then the occurrence of (d) in \Cref{thm:wwglobal} implies that cluster points of $\set{x^k}$ are first-order stationary for \ref{theprogram}.
\end{corollary}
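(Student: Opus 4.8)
The plan is to obtain the corollary as a bookkeeping reduction to \Cref{lem:delfto0}. The one point that needs attention before invoking that lemma is that \Cref{alg:wwglobal} carries no trust-region radius, whereas \Cref{lem:delfto0} is phrased in terms of radii $\set{\eta_k}$ and the associated optimal values $\bar\Delta_k f=\min_{\norm{d}\le\eta_k}\Delta f(x^k;d)$. I would dispose of this by using the standing bound $\norm{d^k}\le M$ from \Cref{thm:wwglobal} and taking the constant choice $\eta_k\equiv M$ (recall $M>0$); with $\bar\Delta_k f$ read relative to this radius, each $d^k$ is feasible for the constrained subproblem, so $\bar\Delta_k f\le\Delta f(x^k;d^k)<0$, and the hypothesis $\Delta f(x^k;d^k)\le\beta\bar\Delta_k f<0$ is then exactly the sandwich condition \eqref{eq:sandwich}.

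With this identification in place, I would argue as follows. Fix a cluster point $\bx$ of $\set{x^k}$ and an infinite $K\subset\bN$ with $x^k\xrightarrow[K]{}\bx$ (if $\set{x^k}$ is finite or has no cluster point the assertion is vacuous, and the occurrence of (d) already forces $\set{x^k}$ to be infinite). Since $t_k>0$, $\Delta f(x^k;d^k)<0$, and \eqref{WWI} hold at every iteration, the values $\set{f(x^k)}$ are strictly decreasing, so $\set{x^k}\subset\cL:=\lev_f(f(x^0))$ and hence $\set{(x^k,\eta_k)}=\set{(x^k,M)}\subset\cL\times\R_+$ with $(x^k,\eta_k)\xrightarrow[K]{}(\bx,M)$ and $0<M<\infty$. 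The remaining hypotheses of \Cref{lem:delfto0} are then checked directly along $K$: $d^k\in M\bB=\eta_k\bB$ because $\norm{d^k}\le M$; the sandwich \eqref{eq:sandwich} holds by assumption; and $\Delta f(x^k;d^k)\to0$ by the occurrence of (d). Applying \Cref{lem:delfto0} to the subsequence indexed by $K$, relabeled as $\bN$ (harmless, since the proof of that lemma only uses the convergence of $(x^k,\eta_k)$ and the epigraphical limits of $\Delta f(x^k;\cdot)+\delta_{\eta_k\bB}$, which are insensitive to relabeling), yields $0\in\sd f(\bx)$, and \Cref{thm:fonc} turns this into first-order stationarity of $\bx$ for \ref{theprogram}. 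As $\bx$ was arbitrary, the conclusion follows.

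There is no real obstacle here: the analytic content---the epi-convergence $\Delta f(x^k;\cdot)+\delta_{\eta_k\bB}\xrightarrow[]{e}\Delta f(\bx;\cdot)+\delta_{\bar\eta\bB}$ and the convergence of the optimal values---is entirely absorbed in \Cref{lem:delfto0}, so no new estimates are produced. The only thing to get right is the translation between the unconstrained algorithmic setting and the ball-constrained hypotheses of \Cref{lem:delfto0} via the retroactive radii $\eta_k\equiv M$, together with the routine observations that the iterates stay in $\cL$ and that (d) supplies $\Delta f(x^k;d^k)\to0$.
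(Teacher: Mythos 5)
Your proposal is correct and follows exactly the route the paper intends: the paper offers no written proof beyond declaring the corollary an immediate consequence of \Cref{lem:delfto0}, and your argument is precisely the routine verification of that lemma's hypotheses (identifying $\eta_k$ with the bound $M$ on $\norm{d^k}$, passing to a convergent subsequence, and reading off $0\in\sd f(\bx)$ via \Cref{thm:fonc}). No discrepancy with the paper's approach.
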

\section{Trust-Region Subproblems}
In this section, we let $\norm{\cdot}$ denote an arbitrary norm on $\R^n$.
\begin{definition}
For $\delta>0$ and $x\in\dom{g}$, define the set of Cauchy steps $\Dcd{\delta}(x)$ by  
\[
	\Dcd{\delta}(x):=\argmin_{\norm{d}\leq \delta} \Delta f(x;d),
\]
and set 
\[
\Deltac{\delta} f(x):=\inf_{\norm{d}\leq \delta} \Delta f(x;d).
\]
\end{definition}
Observe that $\Delta^{\scriptscriptstyle{C}}_{\eta_k} f(x^k) = \overline\Delta_k f$, where $\overline \Delta_k f$ is defined in \Cref{lem:delfto0}. Our pattern of proof in this section follows a path similar to the standard approaches to such results given in \cite{conn2000trust}. In short, the Cauchy step defined above provides the driver for the global convergence of \Cref{alg:trs}.
The particular steps chosen by the algorithm must do at
least as well as the Cauchy step. This idea is formalized by the
notion of the \emph{sufficient decrease condition} described below.
The utility of this condition requires some basic Lipschitz continuity assumptions.

\noindent
{\bf Assumption:}
\begin{itemize}
\item[A1:]
 $c$, $\nabla c$, and $g$ are Lipschitz continuous on $\dom{g}$ with
 Lipschitz constants $L_c,\ L_{c' },$ and $L_g$, respectively.
 \item[A2:]
 $h$ is Lipschitz continuous on $g(\dom{g})$ with Lipschitz constant $L_h$.
 \end{itemize}
 
 These assumptions imply the Lipschitz continuity of $\Deltac{\delta}f$
 on $\dom{g}$.
 
 \begin{lemma}\label{lem:del f Lip}
 Let the assumptions A1 and A2 hold. Then, for $\delta >0$, the mapping
 $x\mapsto \Deltac{\delta}f(x)$ is Lipschitz continuous on $\dom{g}$ with
 constant $L_\Del:=L_h(2L_c+\delta L_{c'})+2L_g$.
 \end{lemma}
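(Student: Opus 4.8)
The plan is to reduce the claim to the one-sided bound $\Deltac{\delta}f(x)\le\Deltac{\delta}f(x')+L_\Del\norm{x-x'}$ for arbitrary $x,x'\in\dom{g}$, since interchanging $x$ and $x'$ then yields the asserted two-sided Lipschitz estimate. Fix such $x,x'$, let $\epsilon>0$, and choose $d'$ with $\norm{d'}\le\delta$ and $\Delta f(x';d')\le\Deltac{\delta}f(x')+\epsilon$; this is possible because $\Deltac{\delta}f(x')\le\Delta f(x';0)=0$, and since $h$ is finite-valued it forces $x'+d'\in\dom{g}$. The idea is to reuse $d'$ as a trial step in the $x$-subproblem, giving $\Deltac{\delta}f(x)\le\Delta f(x;d')$, and then to estimate $\Delta f(x;d')-\Delta f(x';d')$ via the explicit formula \eqref{delta f}.

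That difference decomposes into four pieces, each controlled by A1 and A2. The two $g$-differences, $g(x+d')-g(x'+d')$ and $g(x')-g(x)$, are each bounded in absolute value by $L_g\norm{x-x'}$, using that $g$ is $L_g$-Lipschitz on $\dom{g}$, that all four arguments lie in $\dom{g}$, and that $\norm{(x+d')-(x'+d')}=\norm{x-x'}$. The difference $h(c(x'))-h(c(x))$ is bounded by $L_hL_c\norm{x-x'}$, using A2 for $h$ and the Lipschitz property of $c$. Finally, using A2 followed by A1 and $\norm{d'}\le\delta$, the remaining term $h(c(x)+\nabla c(x)d')-h(c(x')+\nabla c(x')d')$ has absolute value at most $L_h\bigl(\norm{c(x)-c(x')}+\norm{\nabla c(x)-\nabla c(x')}\,\norm{d'}\bigr)\le L_h(L_c+\delta L_{c'})\norm{x-x'}$. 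Summing the four bounds gives $\Delta f(x;d')-\Delta f(x';d')\le\bigl(L_h(2L_c+\delta L_{c'})+2L_g\bigr)\norm{x-x'}=L_\Del\norm{x-x'}$, and letting $\epsilon\downarrow0$ produces the one-sided bound.

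The one point requiring genuine care, and the main obstacle, is that the inequality $\Deltac{\delta}f(x)\le\Delta f(x;d')$ is only useful when $\Delta f(x;d')<\infty$, i.e. when $x+d'\in\dom{g}$, which need not follow from $x'+d'\in\dom{g}$. Convexity of $\dom{g}$ is exactly what rescues this: since $x\in\dom{g}$ and $x'+d'\in\dom{g}$, the segment $[x,\,x'+d']$ lies in $\dom{g}$, so one may replace $d'$ by $d:=s\bigl(d'+(x'-x)\bigr)$ with $s:=\delta/(\delta+\norm{x-x'})\in(0,1]$; then $\norm{d}\le s(\delta+\norm{x-x'})=\delta$ and $x+d=(1-s)x+s(x'+d')\in\dom{g}$, while the choice of $s$ guarantees $\norm{(x+d)-(x'+d')}\le\norm{x-x'}$, keeping the $g$-terms in line. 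Rerunning the four estimates with $d$ in place of $d'$ still gives a bound of the form $(\mathrm{const})\cdot\norm{x-x'}$, now with $\norm{d-d'}\le 2\norm{x-x'}$ feeding the $h$-term. To recover the sharp constant $L_\Del$ rather than a slightly larger one, one can instead observe that the difficulty is absent whenever $x+d'\in\dom{g}$ — in particular for $x$ near $x'$ once $x'+d'$ is taken in $\ri{\dom{g}}$, which is possible at negligible cost by \Cref{lem:deltaandf'}(d) — and then chain this sharp local estimate along the segment $[x,x']\subset\dom{g}$, using finiteness of $\Deltac{\delta}f$ there.
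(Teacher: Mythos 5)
Your proposal follows essentially the same route as the paper: bound $\Delta f(x;d)-\Delta f(x';d)$ uniformly over $d\in\delta\bB$ by splitting it into two $h$-differences and two $g$-differences, estimate each via A1--A2, and then insert a (near-)minimizer of one subproblem as a competitor in the other. The paper uses an exact element of $\Dcd{\delta}(x')$ together with symmetry where you use an $\eps$-minimizer, which is immaterial. Where you genuinely add something is the domain issue you flag: the inequality $\Deltac{\delta}f(x)\le\Delta f(x;d')$ is vacuous if $x+d'\notin\dom{g}$, and the paper's proof passes over this silently (its bound on $g(x^1+d)-g(x^2+d)$ already presupposes both points lie in $\dom{g}$). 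Your convexity patch $d:=s(d'+(x'-x))$ correctly restores finiteness, but, as you concede, it degrades the constant: carrying $\norm{d-d'}\le 2\norm{x-x'}$ through the $h$-term yields $L_h(4L_c+\delta L_{c'})+2L_g$ rather than $L_\Del$. The only genuinely incomplete step is your sketched repair of the sharp constant: \Cref{lem:deltaandf'}(d) lets you replace $d'$ by $td'$ at negligible cost, but $x'+td'=(1-t)x'+t(x'+d')$ lands in $\ri{\dom{g}}$ only when $x'$ itself lies in $\ri{\dom{g}}$; at boundary points of $\dom{g}$ you need a further perturbation of $x'+d'$ toward a fixed point of $\ri{\dom{g}}$ (using convexity of $d\mapsto\Delta f(x';d)$) before rescaling, and the chaining along $[x,x']$ should be carried out via an explicit finite subdivision. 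Since the precise value of $L_\Del$ is not used sharply downstream (only Lipschitz continuity of $\Deltaco f$ enters \Cref{thm:TR} and its corollary), either version of your argument suffices for the paper's purposes.
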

 \begin{proof}
 Observe that for any $x^1,x^2\in\dom{g}$ and $d\in\delta\bB$
 we have
 \[
 \begin{aligned}
 \Del f(x^1;d)-\Del f(x^2;d)=&
 [h(c(x_1)+\nabla c(x_1)d)-h(c(x_2)+\nabla c(x_2)d)]+[h(c(x_2))-h(c(x_1))]\\
 &\quad [g(x^1+d)-g(x^2+d)]+[g(x^2)-g(x^1)].
 \end{aligned}
 \]
 Hence, 
 \[
 \Del f(x^1;d)-\Del f(x^2;d)\le L_\Del\norm{x^1-x^2},
 \]
 and, by symmetry,
 \[
  \Del f(x^2;d)-\Del f(x^1;d)\le L_\Del\norm{x^1-x^2}.
 \]
 Taking $d\in \Dcd{\delta}(x^2)$ in the first of these inequalities and 
 $d\in \Dcd{\delta}(x^1)$ in the second gives the result.
 \end{proof}
 
\begin{definition}
(Sufficient Decrease Condition for $f$)
\label{def:trsufficientdecrease}
We say that a direction choice method satisfies the sufficient decrease condition for 
$f$ if
for all $\epsilon>0$ and $\delta_k>0$, if $x^k\in\Rn$ satisfies $|\Deltac{1} f(x^k)| > \eps$, then
there exists constants $\kappa_1,\kappa_2>0$ depending only on $\eps$ such that
the direction choice $d^k\in\Rn$ satisfies
	\begin{equation}\label{eq:btr}
		\Delta f(x^k;d^k) + \frac{1}{2}d^{k\top}H_kd^k < -\kappa_1\min(\kappa_2,\delta_k).
	\end{equation}
\end{definition}

\begin{remark}
The sufficient decrease condition can be defined using $|\Deltac{\hdel} f(x^k)|$
for any $\hdel>0$, but this choice of $\hdel$ must then remain constant
throughout the iteration process; $\hdel=1$ is chosen for simplicity.
\end{remark}

We now show that the sufficient decrease condition can always be 
satisfied in a neighborhood of any non-stationary point.

\begin{lemma}\label{lem:suff decrease}
Let $x\in\dom{g}$, $H\in\R^{n\times n}$, $\delta>0$ and let $\sigma>0$ be such that 
$\norm{d}_2\leq \sigma\norm{d}$ for all $d\in\Rn$. 
If $\hat d\in\Dco (x)$ with $\Deltaco f(x)<0$, 
then there exists $\hat t\in(0,\min(1,\delta)]$ such that
	\[
		\Delta f(x;\hat t\hat d) + \frac{\hat t^2}{2}\hat d^\top H\hat d\leq \frac{1}{2}\Deltac{1} f(x) \min\left(\frac{|\Deltac{1}f(x)|}{\sigma^2\norm{H}_2}, 1,\delta \right). 
	\]
\end{lemma}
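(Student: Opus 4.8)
The plan is to bound the left-hand side by a scalar quadratic in the step length and then carry out the classical Cauchy-point minimization of that quadratic over $(0,\min(1,\delta)]$. Write $a:=\Deltaco f(x)<0$. Since $\hat d\in\Dco(x)$ we have $\Delta f(x;\hat d)=a$ and $\norm{\hat d}\le 1$, whence $\norm{\hat d}_2\le\sigma$ and $\hat d^\top H\hat d\le\norm{H}_2\norm{\hat d}_2^2\le\sigma^2\norm{H}_2$. For every $t\in[0,1]$, part (d) of \Cref{lem:deltaandf'} gives $\Delta f(x;t\hat d)\le t\Delta f(x;\hat d)=ta$. Combining these two estimates, for all $t\in(0,\min(1,\delta)]$,
\[
\Delta f(x;t\hat d)+\tfrac{t^2}{2}\hat d^\top H\hat d\ \le\ q(t):=ta+\tfrac{t^2}{2}\sigma^2\norm{H}_2,
\]
so it suffices to exhibit $\hat t\in(0,\min(1,\delta)]$ for which $q(\hat t)$ is at most the asserted bound.

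Then I would minimize the convex quadratic $q$ on $(0,\min(1,\delta)]$, splitting on the location of its unconstrained minimizer $t^\star:=|a|/(\sigma^2\norm{H}_2)$, with the convention $t^\star=+\infty$ when $\norm{H}_2=0$. If $t^\star\ge\min(1,\delta)$, then $q$ is nonincreasing on the interval and I take $\hat t:=\min(1,\delta)$; the inequality $\hat t\,\sigma^2\norm{H}_2\le|a|$ yields $\tfrac{\hat t^2}{2}\sigma^2\norm{H}_2\le-\tfrac{\hat t}{2}a$, hence $q(\hat t)\le\tfrac12\hat t\,a=\tfrac12 a\min(1,\delta)$, and $\min(1,\delta)=\min(t^\star,1,\delta)$ in this regime. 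If instead $t^\star<\min(1,\delta)$ (which forces $\norm{H}_2>0$), I take $\hat t:=t^\star$, and a direct computation gives $q(t^\star)=\tfrac12 a\,t^\star=\tfrac12 a\min(t^\star,1,\delta)$. In either case $q(\hat t)\le\tfrac12 a\min\!\big(|a|/(\sigma^2\norm{H}_2),1,\delta\big)$, which is exactly the claimed inequality.

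There is no genuinely hard step here. The two points that require care are: first, part (d) of \Cref{lem:deltaandf'} is available only because $\hat t\le1$, so the search must stay inside $(0,\min(1,\delta)]\subset(0,1]$; second, one must track the sign $a<0$ so that the case split on $t^\star$ versus $\min(1,\delta)$ matches the three-way minimum on the right-hand side, and so that multiplying the identity $\min(1,\delta)=\min(t^\star,1,\delta)$ by $\tfrac12 a$ preserves the inequality. The norm-equivalence constant $\sigma$ enters only through the crude estimate $\hat d^\top H\hat d\le\sigma^2\norm{H}_2$, and the factor $\tfrac12$ on the right is precisely the decrease guaranteed at the Cauchy point of $q$.
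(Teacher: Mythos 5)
Your proposal is correct and follows essentially the same route as the paper: bound the left-hand side by the scalar quadratic $t\,\Deltaco f(x)+\tfrac{t^2}{2}\sigma^2\norm{H}_2$ via Lemma \ref{lem:deltaandf'}(d) and the estimate $\hat d^\top H\hat d\le\sigma^2\norm{H}_2$, then split on whether the unconstrained minimizer $|\Deltaco f(x)|/(\sigma^2\norm{H}_2)$ falls inside $(0,\min(1,\delta)]$. Your explicit convention for the degenerate case $\norm{H}_2=0$ is a minor tidiness improvement over the paper, which tacitly assumes $\sigma^2\norm{H}_2>0$.
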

\begin{proof}
	For any $t\in (0, \min(1,\delta)]$, \Cref{lem:deltaandf'} and H\"older's inequality implies
	\[
		\Delta f(x;t\hat d) + \frac{t^2}{2}\hat d^\top H \hat d \leq t \Delta f(x;\hat d) + \frac{t^2}{2}\sigma^2\norm{H}_2.
	\]
	Set $\alpha = \Delta f(x;\hat d)<0,\ \beta = \sigma^2\norm{H}_2>0$, and 
\[
\hat t=\argmin_{t\in[0,\min(1,\delta)]}\alpha t + \beta t^2/2 
\qquad= \min(1,\delta, -\alpha/\beta).
\]
	
	There are two cases to consider. If $-\alpha/\beta\leq\min(1,\delta)$, then $\hat t = -\alpha/\beta$ and
	\begin{align*}
		\Delta f(x;\hat t\hat d) + \frac{\hat t^2}{2}\hat d^\top H \hat d &\leq \hat t \Delta f(x;\hat d) + \frac{t^2}{2}\hat d^\top H \hat d \\
		&= -\frac{\Delta f(x;\hat d)^2}{\sigma^2\norm{H}_2} + \frac{\Delta f(x;\hat d)^2}{2\sigma^4\norm{H}_2^2}\hat d^\top H \hat d \\
		&= -\frac{\Delta f(x;\hat d)^2}{2\sigma^2\norm{H}_2} \\
		&= \frac{1}{2}\Deltac{1} f(x)\left(\frac{|\Deltac{1} f(x)|}{\sigma^2\norm{H}_2} \right).
	\end{align*}
	Otherwise, $\min(1,\delta)\leq -\alpha/\beta$. Setting $\hat t = \min(1,\delta)$ gives
	\begin{align*}
		\Delta f(x;\hat t\hat d) + \frac{\hat t^2}{2}\hat d^\top H \hat d &\leq \hat t \Delta f(x;\hat d) + \frac{\hat t^2}{2}\sigma^2\norm{H}_2 \\
		&= \hat t \Delta f(x;\hat d) - \frac{\hat t}{2}\Delta f(x;\hat d) \\
		&= \frac{1}{2}\Deltac{1}f(x)\min(1,\delta).
	\end{align*}
	The result follows.
\end{proof}

The sufficient decrease condition 
provides a useful bound on the change in the objective function.

\begin{lemma}\label{lem:ratio achieved}
Let the assumptions A1 and A2 hold.
Suppose 
$\sigma>0$ satisfies $\norm{d}_2\leq \sigma\norm{d}$ for all $d\in\Rn$. Let
$H\in\R^{n\times n}, 0<\overline\beta_1\leq \overline\beta_2 < 1$, and $\alpha,\kappa_1,\kappa_2>0$ be given. Choose $\overline\delta>0$ so that for all $\delta\in[0,\overline\delta]$,
	\[
		\kappa_1(1-\overline\beta_2)\min(\kappa_2,\delta)\ge\frac{1}{2}\delta^2L_hL_{\nabla c} + \frac{1}{2}\sigma^2\delta^2\norm{H}_2.
	\]
	Then, for every $\delta\in[0,\bar\delta]$, $x\in\dom{g}$, and $d\in\delta\bB$ for which
	\[
		\Delta f(x;d) + \frac{1}{2}d^\top H d \leq -\kappa_1\min(\kappa_2,\delta),
	\]
	one has
	\[
		f(x+d)-f(x)\leq \overline\beta_2[\Delta f(x;d)+ \frac{1}{2}d^\top H d]
		\leq \overline\beta_1[\Delta f(x;d)+ \frac{1}{2}d^\top H d].
	\]
\end{lemma}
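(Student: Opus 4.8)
The plan is to reduce the claim to one algebraic identity relating $f(x+d)-f(x)$ to the model value $q:=\Delta f(x;d)+\tfrac12 d^\top H d$, and then to absorb the remainder into $q$ using the defining inequality for $\bar\delta$. Fix $\delta\in[0,\bar\delta]$, $x\in\dom{g}$, and $d\in\delta\bB$ with $q\le-\kappa_1\min(\kappa_2,\delta)$; in particular $q\le0$. Since this inequality forces $\Delta f(x;d)<\infty$ and $h$ is finite-valued, we get $g(x+d)<\infty$, so $x+d\in\dom{g}$, and then the whole segment $[x,x+d]$ lies in $\dom{g}$ by convexity of $\dom{g}$.

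First I would expand $f(x+d)-f(x)$ and $\Delta f(x;d)$ from their definitions; the terms $g(x+d)$, $h(c(x))$, and $g(x)$ cancel, leaving
\[
f(x+d)-f(x)-q=\bigl(h(c(x+d))-h(c(x)+\nabla c(x)d)\bigr)-\tfrac12 d^\top H d .
\]
The crucial feature is that the $g$-terms drop out completely, so only the curvature of $c$ and the size of $H$ enter the remainder — which is precisely why assumptions A1 and A2 suffice. Next I would bound the right-hand side: for the first term, A2 (Lipschitz continuity of $h$) together with the quadratic bound lemma applied to $c$ — legitimate because $\nabla c$ is $L_{\nabla c}$-Lipschitz on $[x,x+d]\subset\dom{g}$ by A1 — yields a bound of order $\tfrac12 L_h L_{\nabla c}\norm{d}^2$; for the second term, $\lvert\tfrac12 d^\top H d\rvert\le\tfrac12\norm{H}_2\norm{d}_2^2\le\tfrac12\sigma^2\norm{H}_2\norm{d}^2$, using $\norm{d}_2\le\sigma\norm{d}$. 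Since $\norm{d}\le\delta$, this gives
\[
f(x+d)-f(x)\le q+\tfrac12\delta^2 L_h L_{\nabla c}+\tfrac12\sigma^2\delta^2\norm{H}_2 .
\]

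Finally I would invoke the choice of $\bar\delta$: for $\delta\le\bar\delta$, the error term on the right is at most $\kappa_1(1-\overline\beta_2)\min(\kappa_2,\delta)\le(1-\overline\beta_2)\lvert q\rvert$, where the last step uses $\lvert q\rvert=-q\ge\kappa_1\min(\kappa_2,\delta)$ from the hypothesis. Substituting and using $q\le0$,
\[
f(x+d)-f(x)\le q-(1-\overline\beta_2)q=\overline\beta_2 q ,
\]
which is the first asserted inequality; the second, $\overline\beta_2 q\le\overline\beta_1 q$, is immediate from $q\le0$ and $\overline\beta_1\le\overline\beta_2$.

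I do not expect a genuine conceptual obstacle: this is essentially the composite analogue of the classical ``sufficient decrease from the descent lemma'' estimate. The two points that need care are (i) the norm bookkeeping among the abstract norm $\norm{\cdot}$, the Euclidean norm (via $\sigma$), and the norms in which the Lipschitz estimates of A1 and A2 are phrased, so that the remainder matches the expression appearing in the hypothesis on $\bar\delta$; and (ii) tracking the sign of $q$, so that the last manipulation multiplies the nonpositive model value by $\overline\beta_2$ rather than $\overline\beta_1$. The cancellation of the $g$-terms in the identity above is the observation that makes the whole estimate possible under A1 and A2 alone.
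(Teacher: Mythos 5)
Your proof is correct and follows essentially the same route as the paper: bound the discrepancy between $f(x+d)-f(x)$ and the model value by $\tfrac12\delta^2L_hL_{\nabla c}+\tfrac12\sigma^2\delta^2\norm{H}_2$ (the $g$-terms cancelling and the $h\circ c$ error controlled by A1--A2 via the quadratic bound lemma), then absorb that remainder using the defining inequality for $\bar\delta$ and the hypothesis $\Delta f(x;d)+\tfrac12 d^\top Hd\le-\kappa_1\min(\kappa_2,\delta)$. The only cosmetic difference is that the paper adds the nonnegative quantity $\tfrac12 d^\top Hd+\tfrac12\sigma^2\delta^2\norm{H}_2$ to its descent-lemma estimate rather than subtracting the model value and bounding the remainder, and your explicit verification that $x+d\in\dom{g}$ is a welcome detail the paper leaves implicit.
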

\begin{proof}
	The Lipschitz assumptions on $h$ and $\nabla c$ imply
	\[
		f(x+d) - f(x) \leq \Delta f(x;d) + \frac{L_hL_{\nabla c}}{2}\norm{d}_2^2.
	\]
	Since $\frac{1}{2}d^\top H d + \frac{1}{2}\sigma^2\delta^2\norm{H}_2\geq0$, it follows that
	\begin{align*}
		f(x+d)-f(x) & \leq \Delta f(x;d) + \frac{1}{2}\delta^2L_hL_{\nabla c}+ \frac{1}{2}d^\top H d + \frac{1}{2}\sigma^2\delta^2\norm{H}_2 \\
		&\leq \Delta f(x;d) + \frac{1}{2}d^\top H d + \kappa_1(1-\overline\beta_2)\min(\kappa_2,\delta)\\
		&\leq \Delta f(x;d) + \frac{1}{2}d^\top H d -(1-\overline\beta_2)[\Delta f(x;d) + \frac{1}{2}d^\top H d ]\\
		&= \overline\beta_2 [\Delta f(x;d) + \frac{1}{2}d^\top H d ] \\
		&<\overline\beta_1 [\Delta f(x;d) + \frac{1}{2}d^\top H d ].
	\end{align*}
\end{proof}
\begin{algorithm}[H]
	\caption{Global Trust Region}
	\label{alg:trs}
	\begin{algorithmic}[1]
	    \Require $x^0\in\dom{g}, H^0\in\R^{n\times n},0<\gamma_1\leq \gamma_2<1\leq\gamma_3,0<\beta_1\leq\beta_2<\beta_3<1, \delta_0>0,$
		\Procedure{TRS}{$x^0,H^0,\gamma_1,\gamma_2,\gamma_3,\beta_1,\beta_2,\beta_3,\delta_0$}
		\State $k\gets 0$\;
		\Repeat
		\State Find $d^k\in
		D_k:=\bset{d}{\norm{d}\leq\delta_k,\ \Delta f(x^k;d) + \frac{1}{2}d^\top H_kd < 0}$
		\If{no such $d^k$}
		\Return
		\EndIf
		\State $r_k\gets\frac{f(x^k+d^k)-f(x^k)}{\Delta f(x^k;d^k) + \frac{1}{2}d^{k\top} H_kd^k}$\;
		\If{$r_k>\beta_3$}
		\State Choose $\delta_{k+1}\in[\delta_k,\gamma_3\delta_k]$
		\ElsIf{$\beta_2\leq r_k\le\beta_3$}
			\State Set $\delta_{k+1}=\delta_k$
		\Else
			\State Choose $\delta_{k+1}\in [\gamma_1\delta_k,\gamma_2\delta_k]$
		\EndIf
		\If{$r_k<\beta_1$}
			\State $x^{k+1}\gets x^k$\;
			\State $H_{k+1} \gets H_k$\;
		\Else
			\State $x^{k+1}\gets x^k+d^k$\;
			\State Choose $H_{k+1}$\;
		\EndIf
		\State $k\gets k+1$\;
		\Until{}
		\EndProcedure
	\end{algorithmic}
\end{algorithm}
The trust-region method of \Cref{alg:trs} selects steps $d^k$ so that 
\[
h(c(x^k)+\nabla c(x^k)d^k)+g(x^k+d^k)+\half (d^k)^\top H_kd^k< f(x^k).
\]
The ratio of actual to predicted reduction in $f$, $r_k$, is then computed, and the trust-region radius is increased or decreased
depending on the value of this ratio. In addition,
if the ratio is sufficiently positive, then the step $d^k$ is
accepted; otherwise it is rejected, and a new step is 
computed using
a smaller trust-region radius. We conclude with a global convergence result based on this method.
\begin{theorem}\label{thm:TR}
Let $f$ be as in \ref{theprogram}, $x^0\in\dom{g}$, and let the hypotheses of Lemma \ref{lem:ratio achieved} be satisfied.
Suppose the sequence $\set{H_k}$ is bounded, and the choice of search directions $\set{d^k}$ generated by \Cref{alg:trs}
satisfy the sufficient decrease condition in \Cref{def:trsufficientdecrease}. Then, one of the following must occur:
	\begin{enumerate}
		\item[(i)] $D_k=\emptyset$ for some $k$;
		\item[(ii)] $f(x^k)\searrow-\infty$;
		\item[(iii)] $|\Deltaco f(x^k)|\to0$.
	\end{enumerate}
\end{theorem}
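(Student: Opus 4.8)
The plan is to argue by contradiction: assume none of (i)--(iii) hold and derive a contradiction from the sufficient decrease machinery. Since (i) fails, a direction $d^k \in D_k$ exists at every iteration, so the algorithm runs forever. Since (iii) fails, there is $\eps > 0$ and an infinite index set $K \subset \bN$ with $|\Deltaco f(x^k)| > \eps$ for all $k \in K$. For such $k$, the sufficient decrease condition of \Cref{def:trsufficientdecrease} supplies constants $\kappa_1,\kappa_2 > 0$ (depending only on $\eps$) with $\Delta f(x^k;d^k) + \tfrac12 d^{k\top}H_k d^k < -\kappa_1\min(\kappa_2,\delta_k)$.

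First I would establish a uniform lower bound on the trust-region radius along $K$. This is the classical trust-region argument: if $\delta_k$ is small enough — specifically $\delta_k \le \bar\delta$, where $\bar\delta$ is the threshold from \Cref{lem:ratio achieved} applied with $\overline\beta_1 = \beta_1$, $\overline\beta_2 = \beta_2$, and the given $\kappa_1,\kappa_2$ (here using that $\set{H_k}$ is bounded so $\norm{H_k}_2$ admits a uniform bound) — then \Cref{lem:ratio achieved} forces $f(x^k+d^k) - f(x^k) \le \beta_1[\Delta f(x^k;d^k) + \tfrac12 d^{k\top}H_k d^k]$, hence $r_k \ge \beta_1$; by the radius update rule a successful step with $r_k \ge \beta_1$ never shrinks the radius below $\gamma_1 \delta_k$, and in fact once the radius drops to the range where success is guaranteed it cannot be driven to zero. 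The upshot is that there is $\delta_{\min} > 0$ with $\delta_k \ge \delta_{\min}$ for all $k$ in (a tail of) $K$; combining this with the sufficient decrease inequality gives, for all such $k$, a successful step whose predicted (and hence, via $r_k \ge \beta_1$, actual) reduction is at least $\beta_1 \kappa_1 \min(\kappa_2,\delta_{\min}) =: \rho > 0$.

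Next I would sum these reductions. Because the $x$-iterate only changes on successful steps and $f$ never increases (on a rejected step $x^{k+1}=x^k$; on an accepted step $r_k \ge \beta_1 > 0$ and the denominator $\Delta f(x^k;d^k)+\tfrac12 d^{k\top}H_k d^k$ is negative, so $f(x^{k+1}) < f(x^k)$), the values $\set{f(x^k)}$ are nonincreasing. Each $k$ in the tail of $K$ contributes a decrease of at least $\rho$, and $K$ is infinite, so $f(x^k) \to -\infty$, which is precisely (ii) — contradiction. Therefore at least one of (i), (ii), (iii) must hold.

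The main obstacle is the radius lower-bound step: one must carefully track the interaction between the ``very successful / successful / unsuccessful'' radius updates ($\gamma_1,\gamma_2,\gamma_3$) and the guaranteed-success threshold $\bar\delta$ from \Cref{lem:ratio achieved}, to show the radius cannot converge to $0$ along $K$ even though unsuccessful iterations shrink it. The standard resolution is that whenever $\delta_k \le \bar\delta$ the step is successful, so the radius can only be reduced from a value exceeding $\bar\delta$, giving $\delta_k \ge \gamma_1 \bar\delta$ as the uniform floor; some care is needed because $\bar\delta$ itself depends on the uniform bound on $\norm{H_k}_2$, which is where boundedness of $\set{H_k}$ is essential. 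A secondary technical point is that $\kappa_1,\kappa_2$ depend on $\eps$ but not on $k$, which is exactly what \Cref{def:trsufficientdecrease} guarantees, so the threshold $\bar\delta$ is a single constant valid for all $k \in K$.
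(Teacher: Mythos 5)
There is a genuine gap, and it comes from how you negate (iii). The failure of $|\Deltaco f(x^k)|\to0$ only gives you a \emph{subsequence} $K$ along which $|\Deltaco f(x^k)|>\eps$; it does not give this bound for all large $k$. Your radius-floor argument, however, needs the latter: the claim ``whenever $\delta_k\le\bar\delta$ the step is successful'' is only justified at iterations where the sufficient decrease condition is in force, i.e.\ where $|\Deltaco f(x^k)|>\eps$, and hence only at $k\in K$. Between two consecutive elements of $K$ the criticality measure may be tiny, \Cref{lem:ratio achieved} gives no guarantee of success, and a long run of rejected steps can drive $\delta_k$ far below $\gamma_1\bar\delta$ before the next index of $K$ is reached. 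So the uniform floor $\delta_k\ge\delta_{\min}$ along $K$ does not follow. A second, related problem: even at $k\in K$, if $\delta_k>\bar\delta$ the step may have $r_k<\beta_1$ and be rejected, contributing \emph{zero} actual decrease, so ``each $k$ in the tail of $K$ contributes at least $\rho$'' is not established either. What your argument actually proves is the weaker statement $\liminf_k|\Deltaco f(x^k)|=0$. (A minor additional slip: to prevent the radius from shrinking you need $r_k\ge\beta_2$, which is what \Cref{lem:ratio achieved} delivers; $r_k\ge\beta_1$ only guarantees acceptance of the step, not preservation of the radius.)

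The paper closes exactly this gap with two ingredients you did not use. First, \Cref{lem:del f Lip} shows $x\mapsto\Deltaco f(x)$ is Lipschitz with constant $L_\Del$, so with $\beps:=\zeta/L_\Del$ the bound $|\Deltaco f(x^p)|\ge\zeta$ propagates from $x^k$ ($k\in J$) to every later iterate within distance $\beps$ of $x^k$; on that whole stretch the sufficient decrease condition and \eqref{eq:tr2} remain available. Second, for each $k\in J$ it introduces the stopping index $\nu(k)$ (the first $p\ge k$ at which either the iterates have moved more than $\beps$ from $x^k$ or $\delta_p\le\bar\delta$ fails) and, in the latter case, the index $s(k)$ of the next successful step. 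A case analysis then extracts a decrease of at least $\beta_2\kappa_1\min(\kappa_2,\gamma_1\bar\delta,\beps)$ attributable to each $k\in J$ — using $\sum_p\delta_p\ge\beps$ in one case and the guaranteed success at radius $\le\bar\delta$ in the other — which forces $f(x^k)\searrow-\infty$ and yields the contradiction. To repair your proposal you would need to add this Lipschitz-propagation step and the forward-tracking bookkeeping; the simple radius-floor argument cannot be patched to give the full limit in (iii).
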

\begin{proof}
We assume that none of (i) - (iii) occur and derive a contradiction. 
First, observe that Lemma \ref{lem:suff decrease} tells us that if $D_k\ne\emptyset$,
then there is an element of $D_k$ that satisfies the sufficient decrease condition,
and so the algorithm is well-defined and the hypotheses of the theorem can be satisfied.
Consequently, since (i) does not occur, the sequence $\set{x^k}$ is infinite. Since (iii) does not occur, there exists $\zeta>0$ 
such that the set 
\[
J:=\bset{k\in\bN}{2\zeta < |\Deltac{1} f(x^k)|}
\]
is an infinite subsequence of $\bN$. 
By the sufficient decrease condition, there exists $\kappa_1,\kappa_2>0$ such that
\begin{equation}\label{eq:tr1}
		\Delta f(x^k;d^k) + \frac{1}{2}(d^k)^\top H_kd^k \leq -\kappa_1\min(\kappa_2,\delta_k)
		\quad\mbox{whenever }\  |\Deltac{1} f(x^k)| > \zeta.
\end{equation}
In particular, 
	\eqref{eq:tr1} holds for all $k\in J$. 
	Lemma \ref{lem:ratio achieved}
	and Lipschitz continuity of $\nabla c$ imply the existence of a $\bar\delta$ such that
\begin{equation}\label{eq:tr2}
		r_k\geq\beta_2\text{ and }x^{k+1} = x^k + d^k\quad
		\mbox{whenever }\  |\Deltac{1} f(x^k)|>\zeta
		\ \mbox{ and }\ \delta_k\leq  \bar\delta.
\end{equation}
By Lemma \ref{lem:del f Lip}, 
\begin{equation}\label{eq:tr3}
|\Deltaco f(z^1)-\Deltaco f(z^2)|\le \zeta\quad\mbox{whenever }\ 
\norm{z^1-z^2}\le \beps\ \mbox{ with }\ z^1,z^2\in\dom{g},
\end{equation}
where $\beps:=\zeta/L_{\Del}$.

For each $k\in J$, let $\nu(k)$ be the first integer $p\ge k$
such that either
\begin{equation}\label{eq:tr4}
\norm{x^{p+1}-x^k}\le \beps,
\end{equation}
or
\begin{equation}\label{eq:tr5}
\delta_{p}\le \bdel
\end{equation}
is violated.
We need to show that $\nu(k)$ is well-defined for every $k\in J$. 
Assume that $\nu(k)$ is not
well-defined for some $k_0\in J$. That is,  
\begin{equation}\label{eq:tr6}
\norm{x^{p+1}-x^{k_0}}\le \beps\ \mbox{ and }\
\delta_{p}\le \bdel\quad\forall\ p\ge k_0.
\end{equation}
For every $k\in J$ with $k\ge k_0$, observe that \eqref{eq:tr3} and
the first condition
in \eqref{eq:tr6} imply 
that $|\Deltaco f(x^{p})|\ge \zeta$ for all $p>k_0$.
Combining this with 
\eqref{eq:tr1}, \eqref{eq:tr2}, the second condition in \eqref{eq:tr6}
and the definition of $r_k$
gives
\begin{equation}\label{eq:tr7}
\begin{aligned}
f(x^{p+1})-f(x^{p})&\le
\beta_2[\Del f(x^{p};d^{p})
+\half (d^{p})^\top H_{p}d^{p}]\\
&\le -\beta_2\kappa_1\min(\kappa_2,\delta_{p})\\
&=-\beta_2\kappa_1\min(\kappa_2,\delta_{k_0})
\end{aligned}
\end{equation}
for all $p\ge k_0$. But then $f(x^{k})\searrow -\infty$, which contradicts
our working hypotheses. Therefore, 
$\nu(k)$ is well-defined for every $k\in J$.

Let $k\in J$ and suppose $\nu(k)$ is such that \eqref{eq:tr4} is 
violated 
but \eqref{eq:tr5} is not violated 
at $x^{\nu(k)+1}$. 
Then either $\nu(k)=k$ or, as in \eqref{eq:tr7},
\begin{equation}\label{eq:tr8}
\begin{aligned}
f(x^{p+1})-f(x^{p})&\le
\beta_2[\Del f(x^{p};d^{p})
+\half (d^{p})^TH_{p})d^{p}]\\
&\le -\beta_2\kappa_1\min(\kappa_2,\delta_{p}),
\end{aligned}
\end{equation}
for $p=k,\dots,\nu(k)-1$.
If $\nu(k)=k$, then, by \eqref{eq:tr2}, $r_k\ge \beta_2$ and so
\[
f(x^{\nu(k)+1})-f(x^k)\le -\beta_2\kappa_1\min(\kappa_2,\delta_k)
\le -\beta_2\kappa_1\min(\kappa_2,\beps);
\]
otherwise, summing \eqref{eq:tr8} over $p$ gives
\[
\begin{aligned}
f(x^{\nu(k)+1})- f(x^{k})&\le (f(x^{\nu(k)+1})-f(x^{\nu(k)}))-
\sum_{p=k}^{\nu(k)-1}\beta_1\kappa_1\min(\kappa_2,\delta_{p})\\
&\le -\beta_1\kappa_1\min(\kappa_2,\sum_{p=k}^{\nu(k)-1}\delta_{p})\\
&\le -\beta_1\kappa_1\min(\kappa_2,\beps),
\end{aligned}
\]
since $\sum_{p=k}^{\nu(k)-1}\delta_{p}\ge \norm{x^{\nu(k)+1}-x^{k}}
\ge\beps$ (here, the second inequality uses the elementary fact that
for any three non-negative numbers $\tau_1,\tau_2,\tau_3$, we have
$\min(\tau_1,\, \tau_2+\tau_3)\le \min(\tau_1,\, \tau_2)+\min(\tau_1,\, \tau_3)$).
Hence,
\begin{equation}\label{eq:tr9}
f(x^{\nu(k)+1})-f(x^k) 
\le -\beta_2\kappa_1\min(\kappa_2,\beps)
\end{equation}
when \eqref{eq:tr4} is 
violated 
but \eqref{eq:tr5} is not violated. 

Next suppose that \eqref{eq:tr5} is violated at $\nu(k)$, and let $s(k)$ be the 
smallest positive integer for which $x^{\nu(k)+s(k)}\ne x^{\nu(k)}$. 
The integer $s(k)$ is well defined since, by \eqref{eq:tr2}, 
$x^{\nu(k)}$ is eventually updated. Also note that
$x^{(\nu(k)+i)}$ satisfies \eqref{eq:tr2} for $ i=0,1,\dots,s(k)-1$.
Therefore, by \eqref{eq:tr1} and \eqref{eq:tr2},
\begin{equation}\label{eq:tr10}
\begin{aligned}
f(x^{\nu(k)+s(k)})-f(x^{\nu(k)})&\le \beta_2[\Delta f(x^{\nu(k)};d^{\nu(k)+s(k)-1}) 
+ \frac{1}{2}d^{(\nu(k)+s(k)-1)\top}H_{\nu(k)}d^{(\nu(k)+s(k)-1)}]\\
&\le -\beta_2\kappa_1\min(\kappa_2,\, \delta_{\nu(k)+s(k)-1})\\
&\le -\beta_2\kappa_1\min(\kappa_2,\, \gamma_1\bdel),
\end{aligned}
\end{equation}
since $x^{(\nu(k)+i)}=x^{\nu(k)}$ so that 
$\delta_{(\nu(k)+i)+1}=\gamma_3\delta_{(\nu(k)+i)},\, i=0,1,\dots,s(k)-1$, and,
in particular, 
$\delta_{\nu(k)+s(k)-1}=\gamma_3^{-1}\delta_{\nu(k)+s(k)}$.
Set $s(k)=1$ if \eqref{eq:tr4} is 
violated 
but \eqref{eq:tr5} is not violated 
at $x^{\nu(k)+1}$. Putting \eqref{eq:tr9} together with \eqref{eq:tr10}, gives
\[
f(x^{\nu(k)+s(k)})-f(x^{k})\le-\beta_2\kappa_1\min(\kappa_2,\, \gamma_1\bdel,\beps)
\quad\forall\, k\in J.
\]
But then $f(x^k)\searrow -\infty$, which contradicts our working hypotheses.
This establishes the theorem. 
\end{proof}

Under the hypotheses of \Cref{thm:TR}, 
\Cref{lem:del f Lip} tells us that $\Deltaco f$ is Lipschitz
continuous on $\dom{g}$. Hence if $\bar x$ is a cluster point
of the sequence $\{x^k\}$ generated by \Cref{alg:trs},
then $\bar x$ is a stationary point of $f$. We record this fact in the
following corollary.

\begin{corollary}
Under the hypotheses of \Cref{thm:TR}, every cluster
point $\bar x$ of the sequence $\{x^k\}$ generated by \Cref{alg:trs}
has $\Deltaco f(\bar x)=0$, and so is a first-order stationary 
point for $f$.
\end{corollary}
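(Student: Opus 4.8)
The plan is to read the conclusion off \Cref{thm:TR} once its first two alternatives are excluded, and then to transfer the resulting estimate $|\Deltaco f(x^k)|\to0$ to the cluster point $\bar x$ by continuity of $\Deltaco f$. First I would observe that the objective values are nonincreasing along the iterates of \Cref{alg:trs}: one has $x^{k+1}\ne x^k$ only when $r_k\ge\beta_1>0$, and in that case $x^{k+1}=x^k+d^k$ with $d^k\in D_k$, so $\Delta f(x^k;d^k)+\tfrac12(d^k)^\top H_kd^k<0$; multiplying $r_k\ge\beta_1$ through by this negative denominator gives $f(x^{k+1})-f(x^k)\le\beta_1[\Delta f(x^k;d^k)+\tfrac12(d^k)^\top H_kd^k]<0$. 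Otherwise $x^{k+1}=x^k$. Hence $f(x^k)\searrow\bar f$ for some $\bar f\in[-\infty,\infty)$.

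Next I would use $\bar x$ to discard the unwanted cases of \Cref{thm:TR}. The existence of a cluster point forces the sequence to be infinite, so alternative (i) fails. Choosing a subsequence $x^{k_j}\to\bar x$ and using that $f$ is lower semicontinuous (being closed) and proper, $-\infty<f(\bar x)\le\liminf_j f(x^{k_j})=\bar f$, so $\bar f>-\infty$ and alternative (ii) fails; therefore alternative (iii) holds, i.e. $|\Deltaco f(x^k)|\to0$. The same inequality gives $f(\bar x)\le\bar f<\infty$, hence $\bar x\in\dom f=\dom g$, so by \Cref{lem:del f Lip} (whose hypotheses A1, A2 are in force under the hypotheses of \Cref{thm:TR}) the map $\Deltaco f$ is Lipschitz continuous on $\dom g$. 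In particular $\Deltaco f(x^{k_j})\to\Deltaco f(\bar x)$ while $|\Deltaco f(x^{k_j})|\to0$, so $\Deltaco f(\bar x)=0$.

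It remains to deduce $0\in\sd f(\bar x)$ from $\Deltaco f(\bar x)=\inf_{\norm d\le1}\Delta f(\bar x;d)=0$, that is, from $\Delta f(\bar x;d)\ge0$ whenever $\norm d\le1$. For arbitrary $d\ne0$ I would apply the monotonicity of the difference quotients $t\mapsto\Delta f(\bar x;td)/t$ from \Cref{lem:deltaandf'}(b) to the unit direction $d/\norm d$: every quotient with $t\le1/\norm d$ is nonnegative (its argument lies in $\bB$), and monotonicity then makes every quotient nonnegative, so $f'(\bar x;d)=\inf_{t>0}\Delta f(\bar x;td)/t\ge0$. By \Cref{thm:fonc}, $f'(\bar x;\cdot)\ge0$ on $\R^n$ is equivalent to $0\in\sd f(\bar x)$, so $\bar x$ is first-order stationary. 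I do not expect any genuine obstacle here: all the substantive content already resides in \Cref{thm:TR} and \Cref{lem:del f Lip}, and the only points needing a little care are verifying that $f$ is nonincreasing along the iterates (so that (ii) can be ruled out via lower semicontinuity and properness) and the short homogeneity argument promoting $\Deltaco f(\bar x)=0$ to the full stationarity inequality on all of $\R^n$.
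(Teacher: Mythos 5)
Your argument is correct and takes essentially the same route as the paper, whose justification for this corollary is just the remark preceding it: invoke alternative (iii) of \Cref{thm:TR} and transfer $|\Deltaco f(x^k)|\to0$ to the cluster point via the Lipschitz continuity of $\Deltaco f$ from \Cref{lem:del f Lip}. You merely fill in details the paper leaves implicit---ruling out alternatives (i) and (ii) using monotonicity of $\{f(x^k)\}$ together with lower semicontinuity and properness of $f$, and promoting $\Deltaco f(\bar x)=0$ to $0\in\sd f(\bar x)$ via the monotone difference quotients of \Cref{lem:deltaandf'} and the equivalences in \Cref{thm:fonc}.
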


\bibliographystyle{abbrv}
\bibliography{cvxcompbib}
\end{document}